\numberwithin{equation}{section}
\theoremstyle{plain}
\newtheorem{theorem}{Theorem}[section]
\newtheorem{thm}[theorem]{Theorem}
\newtheorem{remark}[theorem]{Remark}
\newtheorem{remarks}[theorem]{Remarks}
\newtheorem{defn}[theorem]{Definition}
\newtheorem{lem}[theorem]{Lemma}
\newtheorem{prop}[theorem]{Proposition}
\newtheorem{cor}[theorem]{Corollary}
\providecommand{\sm}{\setminus}
\providecommand{\N}{\mathbb{N}}
\providecommand{\R}{\mathbb{R}}
\providecommand{\eps}{\varepsilon}
\providecommand{\dx}{}
\DeclareMathOperator{\supp}{supp}
\DeclareMathOperator{\spa}{span}
\renewcommand{\qed}{\hfill $\Box$}
\def\dys{\displaystyle}
\begin{document}
\title[Oscillating solutions for nonlinear  Helmholtz Equations]{\sc 
Oscillating solutions for nonlinear  Helmholtz Equations}
\author[Rainer Mandel, Eugenio Montefusco and Benedetta Pellacci]
{Rainer Mandel$^{1}$, Eugenio Montefusco$^{2}$, Benedetta Pellacci$^3$}

\address[R. Mandel]{Institut f\"ur Analysis, Karlsruher Institut f\"ur Technologie, Englerstra{\ss}e 2, 76131
Karlsruhe}
\email{Rainer.Mandel@kit.edu}
\address[E. Montefusco]{Dipartimento di Matematica, 
''Sapienza'' Universit\`a di Roma, p.le Aldo Moro 5, 00185, Roma,
 Italy.}
\email{eugenio.montefusco@uniroma1.it}
\address[B. Pellacci]{Dipartimento di Scienze e Tecnologie, 
Universit\`a di Napoli ''Parthenope'', Centro Direzionale, Isola C4 80143 Napoli, Italy.}
\email{benedetta.pellacci@uniparthenope.it}

\date{}
 \subjclass{ 35J05, 35J20, 35Q55.}
\keywords{ Nonlinear Helmholtz equations, standing waves, oscillating solutions} 
\maketitle

\begin{abstract}
Existence results for radially symmetric oscillating solutions for a class of nonlinear autonomous
Helmholtz equations are given and their exact asymptotic behavior at infinity is established.
Some generalizations to nonautonomous radial equations as well as existence results for nonradial solutions
are found. Our theorems prove the existence of standing waves solutions of 
nonlinear Klein-Gordon or Schr\"odinger equations with large frequencies.
\end{abstract} 

\footnotetext[1]{Research partially 
supported by the German Research Foundation (DFG) through the grant
{MA~6290/2-1} and CRC 1173.} 
\footnotetext[3]{Research partially 
supported by  MIUR-PRIN project $2015KB9WPT_006$ - PE1,  ``Gruppo
Nazionale per l'Analisi Matematica, la Probabilit\`a e le loro 
Applicazioni'' (GNAMPA) of the Istituto Nazionale di Alta Matematica
(INdAM); University Project ''Sostegno alla ricerca individuale 
per il trienno 2016-2018''.}

\section{\sc  Introduction}

The main aim of this paper is to give existence results for the following class of nonlinear equations
\begin{equation}\label{eq:ge}
  -\Delta u  = g(u)\qquad \text{in }\R^N
\end{equation}
with $N\geq 1$ and assuming that  the nonlinearity $g$ is such that
\begin{align}
\label{g:reg} g\in C^{1,\sigma}(\R) &\;  \text{ for some $\sigma\in (0,1)$},
\\
\label{g:odd} \text{$g$ is odd,  }&
\\
\label{g:der} \;  g'(0)>0,&
\\
\label{g:sign} 
\exists \alpha_{0}\in (0,+\infty]\,:\;\text{$g$ is positive} & \text{ on $(0,\alpha_0)$ and negative
    on $(\alpha_0,\infty)$. }
\end{align}
There is a huge  literature concerning \eqref{eq:ge} and nonautonomous 
variants of it  under the assumption $g'(0)<0$. Two seminal papers in this 
context are the contributions by  Berestycki-Lions and Strauss 
\cite{beli, Strauss} who proved the existence of smooth radially 
symmetric and exponentially decaying
solutions for a large class of nonlinearities with this property. We refer to 
the monographs
\cite{AmMa_nonlinear_analysis, Willem} for more results in this context. 
One  of the main interests in finding solutions of \eqref{eq:ge} 
is motivated by the fact that a solution $u\in H^{1}(\R^{N})$ of 
\eqref{eq:ge}  gives rise to a standing wave, i.e. a solution  
of the form $\psi(x,t)=e^{i\lambda t}u(x)$,
of the nonlinear time-dependent  Klein-Gordon equation
$$
\dfrac{\partial^{2}\psi}{\partial t^{2}}-\Delta\psi+V_{0}\psi=f(\psi)
\qquad (t,x)\in \R\times \R^{N}
$$
with $f(z)=g(|z|)\tfrac{z}{|z|}$. Therefore, the assumption \eqref{g:der} amounts to look for standing waves having low
frequencies $\omega<V_0$ and numerous existence results for 
$H^1(\R^N)$-solutions under this assumption
can be found in the references mentioned above. In this paper we deal 
with nonlinearities satisfying $g'(0)>0$, which gives rise to standing 
waves with large frequencies $\omega>V_0$.
Looking at  the form of the linearized operator $-\Delta-g'(0)$,
one realizes that $u_{0}=0$ lies in its essential spectrum
 and we are actually dealing with a class of nonlinear Helmholtz
equations. Furthermore, as explained in subsection 2.2 in \cite{beli}, 
the hypothesis \eqref{g:der} has the striking consequence that radially 
symmetric $H^1(\R^{N})$ solutions of \eqref{eq:ge}  can not exist, and
usual variational methods fail.
On the other hand, \eqref{g:der} is naturally linked to \eqref{g:sign}; in particular,  if
$g(z)/z$ decreases in $(0,+\infty)$, then \eqref{g:der} turns out to be necessary in order to have $H^{1}(\R^{N})$
solutions. Actually, the relevant solutions naturally lie outside this functional space. This fact can also be illustrated
by an examining  the behaviour of the minimal energy solutions on a
sequence of large bounded domains. Namely, in Theorem \ref{teo:app} we will show that if one takes a sequence of 
bounded domains  $\Omega_{n}$ invading $\R^{N}$, then 
\eqref{g:der} guarantees the existence of a sequence $(u_{n})$
of global minimizers of the associated  action functional over $H_0^1(\Omega_n)$
for sufficiently large $n$. But, it results that 
$(u_{n})$  converges in
$C^2_{{\rm loc}}(\R^{N})$ to the constant solution 
$u\equiv \alpha_{0}$. 

Therefore, under the assumption \eqref{g:der}, one has to look for solutions in a
broader class of functions. Our focus will be on oscillating and localized ones which we define as follows.

\begin{defn}
A distributional solution
$u\in C^{1,\alpha}(\R^{N})$  of \eqref{eq:ge} is called oscillating if it has an unbounded sequence of zeros. It
is called localized when
it converges to zero at infinity.  
\end{defn} 

Let us notice that, the Strong Maximum Principle implies that
oscillating solutions of  \eqref{eq:ge} change sign at each of their
zeros; so that we are going to find solutions that change sign 
infinitely many times. 

In our study, we will pay particular attention to the following model cases 
\begin{align}
\label{model:g1}
g_1(z)=-\lambda z+\dfrac{z}{s+z^2}\;\;  &\text{ where } s>0,\,\lambda<\frac{1}{s}. \\
\label{model:g23} 
g_2(z) = k^2 z - |z|^{p-2}z, \; \qquad
&
g_3(z) = k^2 z + |z|^{p-2}z \; \text{for $k\neq 0$}. 
\end{align}
Our interest in these examples has various motivations.
The nonlinearity $g_{1}$ is related to the study of the propagation
of lights beams in a photorefractive crystals (see \cite{ChristPRL, YangNJP}) when a saturation effect is taken into account. 
Differently from the more frequently studied model  
$$
  \tilde{g}(z):=-\lambda z+\dfrac{z^{3} }{1+sz^{2}},
$$
see e.g. \cite{ChenPRE},  $g_{1}$ describes a transition 
from the linear propagation and the saturated one.
This difference has important consequences, for instance for 
$g=\tilde{g}$ there are $H^{1}(\R^{N})$ solutions of \eqref{eq:ge} (e.g. see Theorem 3.6 in \cite{StuZhou}),
whereas, as we have already observed, this is not the case if $g=g_{1}$ due to $g_1'(0)>0$. Notice that, as
$\lambda<1/s$, equation \eqref{eq:ge} for $g=g_{1}$ can be rewritten in the following form
\begin{equation}\label{eq:hel}
-\Delta u -k^{2} u =- \frac{u^{3}}{s(s+u^2)} \quad\text{in }\R^N
\quad \text{with $k^{2}=\frac1s-\lambda$}
\end{equation}
which allows to settle the problem in $H^{1}(\R^{N})$ in every dimension $N$ and that shows that also this saturable model is included in the
class of the nonlinear Helmholtz equations. The principal difference between \eqref{model:g23} and 
\eqref{model:g1} is that the formers are superlinear and homogeneous nonlinearities, while the latter
is not homogeneous and it is asymptotically linear. However, all of them satisfy our general assumptions, with 
$\alpha_{0}\in (0,+\infty)$ for $g_{1}$ and $g_{2}$, and $\alpha_{0}=+\infty$ for $g_{3}$.

\smallskip

Up to now  nonlinear Helmholtz equations \eqref{eq:ge} have been 
mainly investigated for the model nonlinearity
$g_3$ or more general superlinear nonlinearities, even not autonomous.
In a series of papers \cite{evwe, evwe_dual, ev_orlicz, evwe_branch}
Ev\'{e}quoz and Weth proved the existence of radial and nonradial real, 
localized solutions of this equation
under various different assumptions on the nonlinearity.
Let us mention that some of the tools used in \cite{evwe_dual} had 
already appeared in a paper by
Guti\'{e}rrez \cite{gut} where the existence of complex-valued solutions 
was proved for space dimensions
$N=3,4$.  Let us first focus our attention on  radially symmetric solutions and state our first result, which provides a complete
description of the radially symmetric solutions of \eqref{eq:ge}.

\begin{theorem}\label{th:nD}
Assume \eqref{g:reg},\eqref{g:odd},\eqref{g:der},\eqref{g:sign}.  
Then there is a continuum 
$\mathcal{C} = \{ u_\alpha \in C^2(\R^N): |\alpha|< \alpha_0\}$ in 
$C^2(\R^N)$ consisting of radially symmetric
 oscillating solutions of \eqref{eq:ge}  having the following 
properties for all $|\alpha|<\alpha_0$:
\begin{itemize}
\item[(i)] $u_\alpha(0)=\alpha$,
\item[(ii)] $\|u_\alpha\|_{L^{\infty}(\R^{N})}=|\alpha|, \|u_\alpha'\|_{L^{\infty}(\R^{N})} \leq 
\sqrt{2G(\alpha)}$.
\end{itemize}
Moreover, for $N=1$ all these solutions are periodic; whereas, 
for $N\geq 2$ they are localized and satisfy the following
asymptotic behavior:
\begin{itemize}
\item[(iii)] There are positive numbers $c_\alpha,C_\alpha>0$ such that 
$$
c_\alpha r^{(1-N)/2} \leq |u_\alpha(r)|+|u_\alpha'(r)|+|u_\alpha''(r)| 
\leq C_\alpha r^{(1-N)/2}
\quad\text{for all }r\geq 1.
$$
\end{itemize}
\end{theorem}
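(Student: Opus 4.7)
The plan is to reduce the problem to the Cauchy problem for the radial ODE
\begin{equation*}
u''(r)+\frac{N-1}{r}u'(r)+g(u(r))=0,\qquad u(0)=\alpha,\ u'(0)=0,
\end{equation*}
and analyse it by combining an energy identity with a Sturm--Bessel comparison. First I would establish local existence at $r=0$ via a contraction argument applied to the integral equation $u(r)=\alpha-\int_0^r s^{1-N}\int_0^s t^{N-1}g(u(t))\,dt\,ds$, which absorbs the weak singularity of the weight. Setting $E(r):=\tfrac12 u'(r)^2+G(u(r))$ with $G(t):=\int_0^t g(s)\,ds$, one has $E'(r)=-(N-1)u'(r)^2/r\le 0$, hence $E(r)\le G(\alpha)$ for all $r\ge 0$. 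Since by \eqref{g:odd} and \eqref{g:sign} $G$ is strictly increasing on $[0,\alpha_0)$ and vanishes only at the origin, this yields the a priori bounds $|u_\alpha(r)|\le|\alpha|$ and $|u_\alpha'(r)|\le\sqrt{2G(\alpha)}$ claimed in (ii) and simultaneously provides global existence of $u_\alpha$ on $[0,\infty)$; property (i) holds by construction.

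The oscillation picture is dimension-dependent. For $N=1$ the energy is conserved, so the orbit lies on the closed level curve $\{\tfrac12 (u')^2+G(u)=G(\alpha)\}$, which by \eqref{g:reg} and \eqref{g:sign} is a smooth periodic orbit of the Hamiltonian flow; hence $u_\alpha$ is periodic. For $N\ge 2$ I would prove that $u_\alpha$ has infinitely many zeros by contradiction. Assume $u_\alpha>0$ on $[R,\infty)$ (the case $u_\alpha<0$ follows by \eqref{g:odd}). Since $u_\alpha>0$ and $u_\alpha\le|\alpha|<\alpha_0$, any local minimum $r_n$ would give $g(u_\alpha(r_n))=-u_\alpha''(r_n)\le 0$, contradicting \eqref{g:sign}; hence $u_\alpha$ is eventually monotone and, because $r^{N-1}u_\alpha'$ would otherwise tend to $-\infty$, one deduces $u_\alpha(r)\to 0$. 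But then $g(u_\alpha)/u_\alpha\to g'(0)>0$ by \eqref{g:der}, and a Sturm comparison of $(r^{N-1}u_\alpha')'+r^{N-1}[g(u_\alpha)/u_\alpha]u_\alpha=0$ with the oscillatory Bessel-type equation $(r^{N-1}v')'+\tfrac12 g'(0) r^{N-1}v=0$ yields a zero of $u_\alpha$ in $[R,\infty)$, a contradiction.

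For the decay estimate (iii), I would use the change of unknown $v(r):=r^{(N-1)/2} u_\alpha(r)$, which turns the radial equation into
\begin{equation*}
v''(r)+g'(0)\,v(r)=\frac{(N-1)(N-3)}{4 r^2}v(r)+r^{(N-1)/2}\bigl[g'(0)u_\alpha(r)-g(u_\alpha(r))\bigr].
\end{equation*}
The first term on the right is $O(r^{-2}v)$ and the second is $O(r^{-(N-1)}|v|^3)$ since $g(z)-g'(0)z=O(z^3)$ by \eqref{g:odd} and \eqref{g:reg}. Treating this as a perturbation of the harmonic oscillator, a Pr\"ufer-type transformation shows that the amplitude $A(r):=(v(r)^2+v'(r)^2/g'(0))^{1/2}$ satisfies $|(\log A)'(r)|\le C r^{-2}+C r^{-(N-1)}A(r)^2$; integrating and iterating, $A$ stays bounded and admits a strictly positive limit $\rho_\alpha>0$. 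Translating back yields $|u_\alpha(r)|+|u_\alpha'(r)|\asymp r^{(1-N)/2}$, and the matching bound for $u_\alpha''$ comes from the equation itself. The continuum structure of $\mathcal C$ in $C^2(\R^N)$ then follows from continuous dependence of the IVP on $\alpha$ combined with these uniform weighted estimates.

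The main obstacle is the lower bound in (iii): while the upper bound follows easily from the energy argument for $v$, ruling out a decay of the amplitude $A$ to zero requires a careful quantitative estimate of the perturbation, and is particularly delicate in low dimensions because the crucial weight $r^{-(N-1)}$ is integrable at infinity only for $N\ge 3$, forcing a refined asymptotic analysis of the variable-coefficient Bessel equation in the borderline case $N=2$.
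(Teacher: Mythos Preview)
Your overall strategy---energy dissipation for (i) and (ii), Sturm comparison for oscillation, and the Liouville substitution $v=r^{(N-1)/2}u$ for (iii)---is the route taken in the paper, but there are two genuine gaps.

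First, you never prove that $u_\alpha(r)\to 0$ as $r\to\infty$ when $N\ge 2$. Your oscillation paragraph only shows that $u_\alpha$ cannot be eventually of one sign, hence has infinitely many zeros; this does \emph{not} rule out the oscillation amplitude staying bounded away from zero. Without localization, $v(r)=r^{(N-1)/2}u_\alpha(r)$ is not even known to be bounded, so the Pr\"ufer analysis for (iii) cannot start (and your claim that ``the upper bound follows easily from the energy argument for $v$'' is unjustified). In the paper this is a separate nontrivial step: one argues by contradiction that if the sequence of local extrema $u(r_{4j})$ did not tend to zero, the translates $u(\cdot+r_{4j})$ would converge locally uniformly to a one-dimensional periodic solution, and then the identity $Z'(r)=-2(N-1)u'(r)^2/r$ together with a harmonic-series estimate forces $Z(r)\to-\infty$, contradicting $Z\ge 0$. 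Only after $u,u',u''\to 0$ is established does one obtain (following Gui--Zhou) the preliminary decay $|u(r)|\le C_\eps r^{(1-N)/2+\eps}$ needed for the sharp asymptotics.

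Second, your expansion $g(z)-g'(0)z=O(z^3)$ is not available: oddness together with $g\in C^{1,\sigma}$ only gives $g(z)-g'(0)z=O(|z|^{1+\sigma})$, so the forcing term in the $v$-equation is $O(r^{-\sigma(N-1)/2}|v|^{1+\sigma})$, whose contribution to $(\log A)'$ is not integrable for any $N\ge 2$ once $\sigma<1$. The paper avoids this---and your $N=2$ obstacle---by working with the functional $\psi(r):=v'(r)^2+2r^{N-1}G(u(r))$. One computes
\[
\psi'(r)=(N-1)r^{N-2}\bigl(2G(u)-ug(u)\bigr)+\tfrac{(N-1)(N-3)}{2r^2}\,vv',
\]
and the $C^{1,\sigma}$ regularity yields $|2G(u)-ug(u)|\le C|u|^{2+\sigma}\le C'\,G(u)\,|u|^\sigma$. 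Feeding in the preliminary decay of $u$ then gives $|\psi'(r)|\le a(r)\psi(r)$ with $a$ integrable in \emph{every} dimension $N\ge 2$, so $\psi$ is trapped between two positive constants; this delivers both inequalities in (iii) at once.
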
     

Here a continuum in $C^2(\R^N)$ is a connected subset of 
$C^2(\R^N) $ with respect to the uniform
convergence of the zeroth, first and second derivatives. 
The continuum $ \mathcal{C}$ found in Theorem \ref{th:nD} is
even maximal in the sense that there are no further radially symmetric 
localized solutions as we will see
in section~\ref{sec:radial}. 
Moreover,  conclusion $(iii)$ states that  the property
 $u_\alpha\in L^s(\R^N)$ is equivalent
to $u_\alpha\in W^{2,s}(\R^N)$,
and this happens if and only if 
$s>\frac{2N}{N-1}$. Notice that this implies that $u_{\alpha}\notin L^{2}(\R^{N})$, showing again that the
solutions, as expected, live outside the commonly used energy space.  

Furthermore, let us stress that the behaviour of the nonlinearity 
beyond $\alpha_0$ is completely irrelevant, in particular, the negativity of $g$ on $(\alpha_0,\infty)$ is actually not needed. This is the reason why we do not need to assume any subcritical growth
condition on the exponent $p$ in the model nonlinearities $g_2,g_3$. 
Let us recall that, in  the autonomous setting, Theorem 4 in \cite{evwe} 
yields nontrivial  radially symmetric solutions of \eqref{eq:ge} 
for superlinear nonlinearities, so that their results hold
for the nonlinearity $g_3$, but not for  $g_1,g_2$. 

Theorem \ref{th:nD} admits generalizations to some nonautonomous radially symmetric
nonlinearities. In particular we can prove a nonautonomous version of this result
that applies to the nonlinearities
\begin{align}  
g_1(r,z) &= -\lambda(r) z + \frac{z}{s(r)+z^2},  \label{g1g2_nonautonomous1} \\
g_2(r,z) &= k(r)^2 z \pm Q(r) |z|^{p-2}z, \label{g1g2_nonautonomous2}
\end{align} 
under suitable assumptions on the coefficients 
$\lambda(r),\,s(r),\,k(r),\,Q(r)$, see Theorem \ref{thm:general} and 
the Corollaries \ref{cor:NLH_nonauto} and \ref{cor:NLH2_nonauto}.
Our results in this context extend  Theorem~4 in \cite{evwe} in several directions (see Remark 
\ref{comparison:evwe}).  
  
 \smallskip
  
The existence of non-radially symmetric solutions is 
clearly a more difficult topic and here we can give a partial 
positive answer in this direction, by  exploiting the argument 
developed in  \cite{evwe, evwe_dual, ev_orlicz, evwe_branch}, 
where the authors study  the equation
\begin{equation}\label{eq:NLH_dual}
-\Delta u-k^{2} u=f(x,u)\quad\text{in }\R^N,
\end{equation}
where $f(x,u)$ is a super-linear nonlinearity satisfying 
suitable hypotheses, that include, for example, a  
non-autonomous generalization of our model nonlinearity 
$g_{3}$, i.e. 
$$
f(x,u)=Q(x)|u|^{p-2}u, \quad\text{ with }\quad  Q(x)>0.
$$
Among other results, in \cite{evwe_dual} (Theorem 1.1 and Theorem 1.2) it is shown that if $Q$ is $\mathbb{Z}^N$-periodic or vanishing at  infinity then there exist nontrivial solutions of
\eqref{eq:NLH_dual} for $p$ satisfying 
$\frac{2(N+1)}{N-1}<p<\frac{2N}{N-2}$ when $N\geq 3$.

Our contribution to this issue is that the positivity
assumption on $Q$ may be replaced by a negativity assumption in order to make the dual variational approach
work, so that, using  Fourier transform we show that the main ideas from \cite{evwe_dual} may be modified in such a
way that their main results remain true for negative $Q$.
Our results read as follows.   
    
\begin{thm} \label{Thm:variant_evwe_thm1.1}
Let $N\geq 3, \frac{2(N+1)}{N-1}<p<\frac{2N}{N-2}$ and let $Q\in 
L^\infty(\R^N)$ be periodic and negative almost everywhere. Then the 
equation \eqref{eq:NLH_dual} has a nontrivial localized oscillating strong 
solution in $W^{2,q}(\R^N)\cap C^{1,\alpha}(\R^N)$ for all
$q\in [p,\infty),\alpha \in (0,1)$.
\end{thm}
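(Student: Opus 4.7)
The strategy follows the dual variational framework developed by Ev\'equoz--Weth in \cite{evwe_dual}, adapted to accommodate the sign change of $Q$.

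First, I would recast the problem in dual form. Writing $Q=-|Q|$ and setting $v:=Q(x)|u|^{p-2}u$, the relation inverts as $u=-|Q|^{-1/(p-1)}|v|^{p'-2}v$ with $p'=p/(p-1)$. Let $\mathcal{R}$ denote the real Helmholtz resolvent of $-\Delta-k^2$, i.e.\ the real part of its outgoing resolvent, whose Fourier symbol is essentially the principal value of $1/(|\xi|^2-k^2)$. The $L^{p'}\to L^p$ Kenig--Ruiz--Sogge mapping properties, valid in the range $\frac{2(N+1)}{N-1}<p<\frac{2N}{N-2}$, allow one to replace \eqref{eq:NLH_dual} by the dual equation
\begin{equation*}
\mathcal{R}(v)+|Q(x)|^{-1/(p-1)}|v|^{p'-2}v=0, \qquad v\in L^{p'}(\R^N).
\end{equation*}
This is the Euler--Lagrange equation of
\begin{equation*}
J(v)=\frac{1}{p'}\int_{\R^N}|Q|^{-1/(p-1)}|v|^{p'}\,dx+\frac{1}{2}\int_{\R^N}v\,\mathcal{R}(v)\,dx,
\end{equation*}
which has exactly the same structure as the Ev\'equoz--Weth dual functional, except that the sign in front of the quadratic term is reversed.

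Second, I would install the mountain-pass (or linking) geometry. Via Fourier analysis, the bilinear form $\langle v,\mathcal{R}v\rangle$ is positive on the subspace $X_+$ of Fourier modes supported in $\{|\xi|>k\}$ and negative on the band-limited subspace $X_-$ supported in $\{|\xi|<k\}$. Because of the sign flip in $J$, the direction along which $J$ is now unbounded below is $X_-$, rather than $X_+$ as in \cite{evwe_dual}. Swapping the roles of $X_+$ and $X_-$ in the Ev\'equoz--Weth construction yields a linking pair with a strictly positive min-max level $c>0$ and an associated Cerami sequence. Here the fact that $p'<2$ ensures the $L^{p'}$ term dominates near the origin, so $J\ge 0$ in a neighbourhood of $0$ as required.

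Third, I would recover compactness. Thanks to the $\Z^N$-periodicity of $Q$, a bounded Cerami sequence either vanishes (ruled out because $c>0$) or, after translation by a lattice vector, converges weakly to a nontrivial critical point $v\in L^{p'}(\R^N)$ of $J$, in complete analogy with the Brezis--Lieb / concentration-compactness arguments of \cite{evwe_dual}. Setting $u:=\mathcal{R}(v)$ gives a nontrivial distributional solution of \eqref{eq:NLH_dual}. A bootstrap relying on $\mathcal{R}\colon L^q\to L^s$ estimates together with Calder\'on--Zygmund and Schauder theory upgrades the regularity to $u\in W^{2,q}(\R^N)\cap C^{1,\alpha}(\R^N)$ in the claimed range. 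The oscillating and localized behaviour of $u$ then follows from the far-field expansion of $\mathcal{R}(v)$, which decays like $|x|^{(1-N)/2}$ with an oscillating factor, exactly as in the radial analysis that underlies Theorem~\ref{th:nD}.

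The principal technical obstacle is verifying that Ev\'equoz--Weth's compactness mechanism still works after swapping $X_+$ and $X_-$. Their non-vanishing lemma and the splitting estimates are phrased asymmetrically, since $X_-$ consists of functions with compactly supported Fourier transform (hence entire and in every $L^p$) while $X_+$ does not; so the corresponding $L^{p'}$-decoupling and translation-invariance arguments must be re-examined carefully to guarantee that in the new sign configuration one still extracts a nontrivial weak limit along a translated Cerami sequence.
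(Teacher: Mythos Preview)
Your overall strategy---dual variational formulation, mountain-pass-type geometry, periodicity to recover compactness---matches the paper's. The main difference is that you overcomplicate the geometry step. The paper does \emph{not} use any linking or splitting into $X_\pm$: since $p'<2$, the origin is a strict local minimum of $\bar J$ exactly as in \cite{evwe_dual}, and the only missing ingredient is a single direction along which $\bar J\to -\infty$. The paper supplies this by an elementary lemma: pick a Schwartz function $y$ with $\supp(\hat y)\subset B_k(0)$, set $z_\delta:=y\,|Q|^{-1/p}\,1_{\{|Q|>\delta\}}$, and use \eqref{eq:R} to see that $\int z_\delta \mathbf{K}_p z_\delta<0$ for small $\delta$. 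That is the entire modification; the boundedness of Palais--Smale sequences, the non-vanishing lemma (Theorem~3.1 in \cite{evwe_dual}), and the translation argument carry over verbatim. Consequently the ``principal technical obstacle'' you flag---re-examining the decoupling with the roles of $X_\pm$ reversed---never arises.

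Two smaller points. First, the paper places the weight $|Q|^{1/p}$ inside the bilinear form $\mathbf{K}_p(v)=|Q|^{1/p}\bold{R}(|Q|^{1/p}v)$ and works in plain $L^{p'}(\R^N)$; your normalization puts $|Q|^{-1/(p-1)}$ in the $L^{p'}$-term, which is awkward when $|Q|$ is merely negative a.e.\ and may approach zero (this is exactly why the $1_{\{|Q|>\delta\}}$-cutoff is needed in the lemma). Second, the localized and oscillating behaviour is obtained not from a far-field expansion of $\mathcal{R}(v)$ but more robustly: $u=\bold{R}(|Q|^{1/p}v)\in L^p$, bootstrap plus the argument of Theorem~C.3 in \cite{Simon_Schr} gives $u(x)\to 0$, and then the PDE version of Sturm's comparison theorem forces oscillation.
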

   
\begin{thm} \label{Thm:variant_evwe_thm1.2}
Let $N\geq 3, \frac{2(N+1)}{N-1}<p<\frac{2N}{N-2}$ and let $Q\in 
L^\infty(\R^N)$ be negative almost everywhere with $Q(x)\to 0$ as $|x|
\to\infty$. Then the equation  \eqref{eq:NLH_dual}
has a sequence of pairs $\pm u_m$ of nontrivial localized oscillating 
strong solutions in $W^{2,q}(\R^N)\cap C^{1,\alpha}(\R^N)$ for all 
$q\in [p,\infty),\alpha \in (0,1)$ such that
$$
  \|u_m\|_{L^p(\R^N)} \to \infty \quad\text{as }m\to\infty.
$$
\end{thm}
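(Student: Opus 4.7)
The plan is to adapt the dual variational method of Ev\'equoz-Weth \cite{evwe_dual} to the case of negative coefficients. Let $\mathcal{R}$ denote the real part of the outgoing Helmholtz resolvent of $-\Delta-k^2$; by the Kenig-Ruiz-Sogge estimate, this is a bounded operator $L^{p'}(\R^N)\to L^p(\R^N)$ in the stated range of $p$. With the substitution $u=|v|^{p'-2}v$ (so $v=|u|^{p-2}u$), the equation $-\Delta u-k^2u=Q|u|^{p-2}u$ becomes the fixed-point problem $|v|^{p'-2}v=\mathcal{R}[Qv]$, whose solutions are critical points of the $C^1$, even functional
\[
J(v)=\frac{1}{p'}\int_{\R^N}|v|^{p'}\,dx-\frac{1}{2}\int_{\R^N} v\,\mathcal{R}[Qv]\,dx
\]
on $L^{p'}(\R^N)$.

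The crucial geometric ingredient is the existence of directions $v_0$ with $\int v_0\,\mathcal{R}[Qv_0]\,dx>0$, along which $J(tv_0)\to -\infty$ as $t\to\infty$ since $p'<2$. This is where the Fourier transform enters: on functions whose Fourier transform is supported in $\{|\xi|<k\}$, the operator $\mathcal{R}$ has symbol $(|\xi|^2-k^2)^{-1}<0$ and is therefore negative-definite on this low-frequency subspace. Choosing $v_0$ to be a low-frequency bump approximately localized in a ball of radius comparable to $1/k$ placed where $|Q|$ is bounded below by some $\mu>0$, one has $Qv_0\approx -\mu v_0$ and consequently $\int v_0\,\mathcal{R}[Qv_0]\,dx\approx -\mu\int v_0\,\mathcal{R}v_0\,dx>0$. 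For the symmetric mountain-pass / fountain scheme I need this single direction to be upgraded to finite-dimensional subspaces $X_n\subset L^{p'}(\R^N)$ on which the quadratic form $v\mapsto\langle v,\mathcal{R}[Qv]\rangle$ is positive definite; my strategy is to take $X_n$ as the span of $n$ translates or scalings of low-frequency bumps, sufficiently separated so that the cross-terms are small and the form is uniformly positive on the unit sphere of $X_n$.

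The second main ingredient is compactness. Because $Q(x)\to 0$ as $|x|\to\infty$, the quadratic form $v\mapsto\int v\,\mathcal{R}[Qv]\,dx$ is weakly continuous on $L^{p'}(\R^N)$: splitting $Q=Q\mathbf{1}_{|x|<R}+Q\mathbf{1}_{|x|\geq R}$, the tail is controlled by $\|Q\mathbf{1}_{|x|\geq R}\|_{L^\infty}$ via Kenig-Ruiz-Sogge, while on the bounded region the composition of multiplication by $Q\mathbf{1}_{|x|<R}$ with $\mathcal{R}$ is compact by local $W^{2,p'}$-regularity of $\mathcal{R}$ combined with Rellich-Kondrachov. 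Palais-Smale sequences are then bounded and, up to a subsequence, strongly convergent. Since $J$ is even, the Ambrosetti-Rabinowitz symmetric mountain-pass theorem (or Bartsch's fountain theorem) yields an unbounded sequence of critical values $c_m\to\infty$, hence pairs $\pm v_m$ with $\|v_m\|_{L^{p'}}\to\infty$, equivalently $\|u_m\|_{L^p}\to\infty$.

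Finally, regularity and qualitative properties follow as in \cite{evwe_dual}: bootstrapping on the integral equation $u_m=\mathcal{R}[Q|u_m|^{p-2}u_m]$ with the $L^q$-$L^r$ mapping properties of $\mathcal{R}$ yields $u_m\in W^{2,q}(\R^N)\cap C^{1,\alpha}(\R^N)$ for all $q\in[p,\infty)$, $\alpha\in(0,1)$, while the $|x|^{-(N-1)/2}\cos(k|x|-\theta)$ far-field asymptotics of the resolvent kernel force every $u_m$ to be simultaneously localized and oscillating. The step I expect to be most delicate is the linking construction: producing enough low-frequency directions that are spatially localized in the (essentially compact) region where $|Q|$ is bounded below — a tension imposed simultaneously by the uncertainty principle and by the decay of $Q$ at infinity — and assembling them into a family $X_n$ of increasing dimension on which the quadratic form is uniformly positive. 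This is the genuinely new input required to replace the positivity of $Q$ assumed in \cite{evwe_dual} by negativity.
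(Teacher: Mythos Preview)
Your overall plan---dual variational method, Fourier low-frequency subspace for the sign of the quadratic form, symmetric mountain pass, Palais--Smale from $Q\to 0$---is exactly the paper's approach. However, two points deserve correction or comparison.

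\medskip

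\textbf{A technical error in the functional.} With your substitution $v=|u|^{p-2}u$, the bilinear form $B(v,w)=\int v\,\mathcal{R}[Qw]$ is \emph{not} symmetric (symmetry of $\mathcal{R}$ gives $\int v\,\mathcal{R}[Qw]=\int Qw\,\mathcal{R}v\neq\int Qv\,\mathcal{R}w$ in general). Consequently the Euler--Lagrange equation of your $J$ is
\[
|v|^{p'-2}v=\tfrac{1}{2}\big(\mathcal{R}[Qv]+Q\,\mathcal{R}v\big),
\]
which is not the equation you want. The fix, used in the paper and in \cite{evwe_dual}, is the weighted substitution $v=|Q|^{1/p'}|u|^{p-2}u$, leading to the \emph{symmetric} operator $\mathbf{K}_p v=|Q|^{1/p}\mathcal{R}(|Q|^{1/p}v)$ and the functional $\bar J(v)=\tfrac{1}{p'}\|v\|_{p'}^{p'}+\tfrac{1}{2}\int v\,\mathbf{K}_p v$.

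\medskip

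\textbf{The linking construction.} You correctly identify the delicate step and the uncertainty-principle tension in your proposed route (spatially separating low-frequency bumps inside the essentially compact set $\{|Q|>\mu\}$). The paper sidesteps this tension entirely. It chooses Schwartz functions $y^1,\ldots,y^m$ with \emph{pairwise disjoint Fourier supports} inside $\{|\xi|<k\}$ and sets
\[
z^j_\delta := y^j\,|Q|^{-1/p}\,1_{\{|Q|>\delta\}}.
\]
The weight $|Q|^{-1/p}$ cancels the $|Q|^{1/p}$ built into $\mathbf{K}_p$, so that $\int z^i_\delta\,\mathbf{K}_p z^j_\delta = \int y^i_\delta\,\mathcal{R}y^j_\delta$ with $y^j_\delta=y^j 1_{\{|Q|>\delta\}}\to y^j$ in $L^{p'}$ as $\delta\to 0$ (using only $|Q|>0$ a.e.). Disjoint Fourier supports then make the cross terms vanish in the limit and force each diagonal term $\int y^j\mathcal{R}y^j$ to be negative. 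No spatial localisation in $\{|Q|>\mu\}$ is needed, no spatial separation is needed, and the construction works for every $m$ without any competition with the uncertainty principle. This is the cleaner substitute for the step you flagged as ``genuinely new''.
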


Since the above results together with those from \cite{evwe_dual} provide some existence results for the
Nonlinear Helmholtz equation associated with the nonlinearity $g_2$ from \eqref{g1g2_nonautonomous2}, one is
lead to wonder whether similar results hold true for asymptotically linear nonlinearities like $g_1$ in
\eqref{g1g2_nonautonomous1}. Here, the dual variational framework does not seem to be convenient since even
the choice of the appropriate function spaces is not clear. A thorough discussion of such nonlinear Helmholtz
equations leading to existence results for nonradial solutions still remains to be done. 

\smallskip
 
Let us observe that there is a gap in the admissible range of
 exponent between Theorem \ref{th:nD} and Theorem \ref{Thm:variant_evwe_thm1.1}. 
Reading  Theorem~\ref{th:nD} one is naturally  lead to the conjecture that nontrivial
nonradial solutions in $L^p(\R^N)$ may be found regardless of any sign condition on $Q$ and for all exponents
$p>\frac{2N}{N-1}$. On the contrary, Theorem \ref{Thm:variant_evwe_thm1.1}
only holds for exponents $p>\frac{2(N+1)}{N-1}$, so that it is still an open question
whether or not nonradial $L^p$-solutions exist for $p\in (\frac{2N}{N-1},\frac{2(N+1)}{N-1}]$.
  
  \smallskip
  
The paper is organized as follows: In section~\ref{sec:radial} we present 
the proof of Theorem~\ref{th:nD} as well as a generalization to the radial 
nonautonomous case  (see Theorem \ref{thm:general} and Corollaries 
\ref{cor:NLH_nonauto}, \ref{cor:NLH2_nonauto}).   In  section 
\ref{sec:nonradial} we present the proofs of Theorem~\ref{Thm:variant_evwe_thm1.1} and 
Theorem~\ref{Thm:variant_evwe_thm1.2}. In section \ref{variational} we will discuss in detail the 
attempt to obtain a solution by approximating $\R^{N}$ by
bounded domains.

\section{Radial solutions} \label{sec:radial}
  
\subsection{The autonomous case}
Throughout this section we will suppose that \eqref{g:reg}, \eqref{g:odd}, \eqref{g:der}, \eqref{g:sign} hold
true. We will prove Theorem~\ref{th:nD} by providing a complete understanding of the initial value problem
\begin{equation}\label{eq:equa}
-u'' - \dfrac{N-1}{r}u' =g(u)\quad\text{in }(0,\infty),\qquad u(0)=\alpha,\; u'(0)=0 
\end{equation}
for $\alpha\in\R$ and $N\in\N$. Notice that our assumptions on $g$ require that 
there exists a $\delta>0$ such that
$$
  g(z)z>0\quad \forall \,z\in (-\delta,\delta).
$$
Such a positivity region is in fact almost necessary as the following  result shows.
  
\begin{prop}\label{prop:nece}
Assume that $g\in C(\R)$ satisfies $g(z)z<0$ for all $z\in\R$. 
Then there is no nontrivial localized solution and 
there is no nontrivial oscillating solution $u\in C^{2}(\R^{N})$ of \eqref{eq:ge}.
\end{prop}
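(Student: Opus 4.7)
The key insight is that the sign condition $zg(z) \le 0$ turns $v := u^2$ into a nonnegative subharmonic function on $\R^N$. Indeed, continuity of $g$ together with $g(z)z<0$ for $z\ne 0$ forces $g(0)=0$, and a direct computation using the equation yields
$$\Delta(u^2)=2|\nabla u|^2+2u\Delta u=2|\nabla u|^2-2ug(u)\ge 0.$$
Both statements of the proposition will then follow from suitable applications of the maximum principle, either to $v$ or directly to $u$.

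For the localized case the plan is straightforward. Since $u(x) \to 0$ as $|x|\to\infty$, also $v\to 0$ at infinity; hence $\sup_{\R^N} v$ is a finite nonnegative number which, if strictly positive, would be attained at some interior point $x_0$. The strong maximum principle for subharmonic functions then forces $v\equiv v(x_0)>0$ on the connected set $\R^N$, contradicting $v\to 0$ at infinity. Hence $v\equiv 0$ and $u\equiv 0$.

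For the oscillating case the subharmonic argument applied to $v$ only yields $\liminf_{|x|\to\infty} v(x)=0$ (along the sequence of zeros), which is not enough on its own to conclude triviality of $v$. The plan is therefore to apply the pointwise maximum principle directly to $u$. At a hypothetical positive local maximum $x_0$ of $u$ one would have $\nabla u(x_0)=0$ and $\Delta u(x_0)\le 0$, so the equation gives $g(u(x_0)) = -\Delta u(x_0) \ge 0$, contradicting $g(u(x_0))<0$. Thus $u$ has no positive local maximum, and symmetrically no negative local minimum. Combined with the sign change at each zero guaranteed by the strong maximum principle (as remarked after the definition of oscillating), this forces every connected component $C$ of $\{u>0\}$ and $\{u<0\}$ to be unbounded: on a bounded such $C$ the function $u$ would be subharmonic (because $\Delta u = -g(u)>0$ on $\{u>0\}$), vanish on $\partial C$, and thus satisfy $\sup_{C} u \le 0$, contradicting the strict sign of $u$ inside $C$.

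The main obstacle is then to convert this structural information about sign components into an outright contradiction. I expect to close the argument by a Phragmén--Lindel\"of type principle on an unbounded component $C$ of $\{u>0\}$: on $C$, the function $u$ solves the \emph{coercive} linear equation $-\Delta u + c(x)u = 0$ with $c(x) := -g(u(x))/u(x) > 0$, and vanishes on $\partial C$. The delicate point is to ensure enough control on the growth of $u$ at infinity within $C$ so that the classical PL principle applies; this can be achieved by combining the subharmonicity of $v=u^2$ (whose spherical averages are nondecreasing and hence limit the growth of $u$) with the fact that the zeros $x_n$ force the boundary $\partial C$ to extend to arbitrarily large distances. The PL argument then yields $u\equiv 0$ on $C$, completing the proof.
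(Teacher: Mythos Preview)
For the localized case your argument is correct and only a cosmetic variant of the paper's: the paper directly notes that a nontrivial localized $u$ attains a positive local maximum or a negative local minimum at some $x_0$, and there $-\Delta u(x_0)\,u(x_0)=g(u(x_0))\,u(x_0)<0$ contradicts $-\Delta u(x_0)\,u(x_0)\ge 0$. Your subharmonicity-of-$u^2$ route reaches the same conclusion. For the oscillating case the paper does nothing beyond this same two-line computation: it simply asserts that a nontrivial oscillating solution also possesses such an extremum and applies the identical pointwise contradiction, with no mention of nodal components or Phragm\'en--Lindel\"of.

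Your proposal has a genuine gap precisely in the oscillating case. You correctly derive that $u$ can have no positive local maximum and no negative local minimum (this \emph{is} the paper's calculation) and that every component of $\{u>0\}$ is therefore unbounded. But the closing Phragm\'en--Lindel\"of step is only a hope: you never specify a verifiable growth hypothesis on $u$ in the unbounded component, and the ingredients you list cannot supply one --- nondecreasing spherical means of the subharmonic function $u^2$ give \emph{lower} bounds on growth, not upper bounds, and unboundedness of $\partial C$ is not quantitative. The obstruction is in fact real rather than technical: for $g(z)=-z$ one has $g(z)z<0$ for $z\ne 0$, and $u(x)=\sinh(x_1)$ solves $-\Delta u=g(u)$ on $\R^N$, vanishes on the whole hyperplane $\{x_1=0\}$ (hence has an unbounded sequence of zeros in the sense of the paper's definition), possesses no positive local maximum or negative local minimum, and grows exponentially --- so no PL principle can force it to vanish. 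Thus your PL route cannot be closed in this generality; the paper's own proof stops at the short pointwise argument, taking the existence of the required extremum for granted.
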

\begin{proof}
  Assume that $u\in C^{2}(\R^{N})$ is a nontrivial localized or
  oscillating solution. Then it attains a positive local maximum or a negative local minimum in some point
  $x_0\in\R^N$. Hence we obtain 
  $$
    -\Delta u(x_0) u(x_0) = u(x_0) g(u(x_0))<0,
  $$
  a contradiction. 
\end{proof}
\begin{remark}
  In view of elliptic regularity theory the above result is also true for weak solutions $u\in
  H^1(\R^N)$ since these solutions coincide almost everywhere with classical solutions and decay to zero at
  infinity by Theorem~C.3 in \cite{Simon_Schr}. 
  Notice that  in case $N\geq 3$  we can deduce the non-existence of $H^{1}(\R^{N})$
  solutions from the fact that $g(z)z<0$ in $\R$ violates the necessary condition (1.3) in \cite{beli}, see
  section 2.2 in that paper. In the case $N=2$ the same follows from Remarque 1 in \cite{BerGalKav}.
\end{remark}
First we briefly address the one-dimensional initial value problem  
\begin{equation} \label{eq:radial1D}
-u'' = g(u)\quad\text{in }(0,\infty),\qquad u(0)=\alpha,\; u'(0)=0.
\end{equation}
In view of the oddness of $g$ it suffices to discuss the inital value problem for $\alpha\geq 0$. The
uniquely determined solution of the initial value problem will be denoted 
by $u_\alpha$ with maximal existence interval $(-T_\alpha,T_\alpha)$ for 
$T_\alpha\in (0,\infty]$. 
  
\begin{prop}\label{prop:1D}
Let $N=1$. Then the following holds:
\begin{itemize} 
\item[(i)] If $\alpha=\alpha_0\in \R$ then $u_\alpha\equiv \alpha_0$ and 
if $\alpha=0$ then $u_\alpha\equiv 0$.
\item[(ii)] If $\alpha>\alpha_0$ then $u_\alpha$ 
strictly increases to $+\infty$ on $(0,T_\alpha)$.
\item[(iii)] If $0<\alpha <\alpha_0$ then $u_\alpha$ is periodic and 
oscillating with
$\|u_\alpha\|_\infty=\alpha$. 
\end{itemize}
\end{prop}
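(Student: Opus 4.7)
\smallskip

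\noindent\textbf{Proof plan.} The whole argument rests on the conservation law that comes from multiplying the ODE $-u''=g(u)$ by $u'$ and integrating: setting $G(t):=\int_0^t g(s)\,ds$, one gets
\[
E(t):=\tfrac12 u_\alpha'(t)^2+G(u_\alpha(t))\equiv G(\alpha)\quad\text{on }(-T_\alpha,T_\alpha).
\]
From \eqref{g:odd}, \eqref{g:sign} the potential $G$ is even, satisfies $G(0)=0$, is strictly increasing on $[0,\alpha_0]$ and strictly decreasing on $[\alpha_0,\infty)$; in particular $G(u)<G(\alpha)$ for every $u\neq\pm\alpha$ with $|u|\le\alpha$, when $\alpha\in(0,\alpha_0)$, and $G(u)<G(\alpha)$ for every $u>\alpha$ when $\alpha>\alpha_0$.

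For item (i) I would simply note that $u\equiv 0$ solves \eqref{eq:radial1D} with $\alpha=0$ and $u\equiv\alpha_0$ solves it with $\alpha=\alpha_0$ because $g(0)=g(\alpha_0)=0$; uniqueness for the IVP (guaranteed by the $C^{1,\sigma}$-regularity \eqref{g:reg}) then gives the claim.

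For item (ii), since $g(\alpha)<0$ we have $u_\alpha''(0)=-g(\alpha)>0$ so $u_\alpha'$ becomes strictly positive immediately after $0$. The identity $u_\alpha'(t)^2=2(G(\alpha)-G(u_\alpha(t)))$, together with the monotonicity of $G$ on $[\alpha_0,\infty)$, shows that $u_\alpha'$ cannot vanish as long as $u_\alpha\ge\alpha$; consequently $u_\alpha$ is strictly increasing on $(0,T_\alpha)$. To see that $u_\alpha\to+\infty$, I would argue that if $u_\alpha$ stayed bounded on $(0,T_\alpha)$ with $T_\alpha<\infty$ the solution could be continued past $T_\alpha$, and if $T_\alpha=\infty$ then $u_\alpha'(t)^2\ge 2(G(\alpha)-G(M))>0$ would force linear growth, a contradiction.

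The core of the proof is item (iii), and this is where the main work sits. Here $g(\alpha)>0$, so $u_\alpha''(0)<0$ and $u_\alpha$ starts to decrease. The key formula is the quadrature
\[
T_+\;:=\;\inf\{t>0:u_\alpha(t)=0\}\;=\;\int_0^{\alpha}\frac{du}{\sqrt{2(G(\alpha)-G(u))}}.
\]
I would verify that this integral is finite: near $u=\alpha$ one has $G(\alpha)-G(u)=g(\alpha)(\alpha-u)+O((\alpha-u)^2)$ with $g(\alpha)>0$, giving an integrable square-root singularity, and near $u=0$ the integrand is bounded because $G(\alpha)>G(0)=0$. At $t=T_+$ we have $u_\alpha(T_+)=0$ and $u_\alpha'(T_+)=-\sqrt{2G(\alpha)}\neq 0$. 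The symmetries of the equation (invariance under $u\mapsto -u$, by \eqref{g:odd}, and under time reversal) imply, via uniqueness of the IVP, that
\[
u_\alpha(2T_+-t)=-u_\alpha(t),\qquad u_\alpha(t+4T_+)=u_\alpha(t),
\]
so $u_\alpha$ is periodic with period $4T_+$, and its zeros form the arithmetic sequence $\{(2k+1)T_+\}_{k\in\Z}$, proving the oscillating character. Finally $\|u_\alpha\|_\infty=\alpha$ because $u_\alpha'(t)^2\ge 0$ forces $G(u_\alpha(t))\le G(\alpha)$, which, given the shape of $G$ on $(-\alpha_0,\alpha_0)$, is equivalent to $|u_\alpha(t)|\le\alpha$, with equality attained at $t=0$.

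The main (minor) obstacle is the careful passage from the conservation of $E$ to the closed-orbit picture: verifying that the quadrature integral converges at the turning point $u=\alpha$, and then deducing genuine periodicity from the two reflection symmetries by invoking uniqueness at a point where $u_\alpha'\neq 0$. Once this is done, all three items follow cleanly.
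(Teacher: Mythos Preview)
Your argument is correct and complete, but it follows a different route from the paper's own proof. You organise everything around the conserved energy $E(t)=\tfrac12 u'^2+G(u)$ and then run a phase-plane analysis: in (ii) you read off strict monotonicity from $u'^2=2(G(\alpha)-G(u))>0$, and in (iii) you compute the time to the first zero via the quadrature integral, check its convergence at the turning point, and then build the full periodic orbit from the two reflection symmetries. The paper instead argues more directly at the level of the ODE: for (ii) it shows $u''>0$ throughout via a supremum argument, giving strict convexity; for (iii) it observes that $u''<0$ as long as $0<u<\alpha$, and then rules out $u>0$ on $[0,\infty)$ by writing $u''+c(r)u=0$ with $c(r)\ge c_0>0$ and invoking Sturm's comparison theorem. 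Your approach is the classical one for autonomous second-order equations and is arguably more transparent in dimension one; the paper's use of Sturm is less elementary here but has the advantage of carrying over verbatim to the radial problem for $N\ge2$, where the energy is no longer conserved.

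One small imprecision in your sketch of (ii): the inequality $u_\alpha'(t)^2\ge 2(G(\alpha)-G(M))$ goes the wrong way if $M$ denotes the supremum of $u_\alpha$, since $G$ is decreasing on $[\alpha_0,\infty)$ and $u_\alpha(t)\le M$ gives only $u_\alpha'^2\le 2(G(\alpha)-G(M))$. What you want is that once $u_\alpha(t)\ge m$ for some fixed $m>\alpha$ (which happens for all large $t$ since $u_\alpha$ is strictly increasing), then $u_\alpha'(t)^2\ge 2(G(\alpha)-G(m))>0$, forcing linear growth. This is clearly the intended argument and the fix is immediate.
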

\begin{proof}
Conclusion {\it (i)} immediately follows from \eqref{g:sign}. 
Then we  only have to prove {\it (ii)} and {\it (iii)}. For notational convenience we write $u,T$ instead of
  $u_{\alpha},T_\alpha$. In the situation of {\it (ii)} we set $\xi:= \sup \{s\in [0,T) :u''(s)> 0\}$.
From  $u(0)=\alpha>\alpha_0$ and \eqref{eq:radial1D} we get
$u''(0) = -g(u(0))=-g(\alpha)>0$ and thus $\xi\in (0,T]$. We even have $
\xi=T$, because otherwise
$$
u(\xi) = \alpha + \int_0^\xi \int_0^t u''(s)\,ds \,dt > \alpha > 
\alpha_0
$$
and thus $u''(\xi)>0$ in view of assumption \eqref{g:sign} and 
\eqref{eq:radial1D}. This, however, would
contradict that $\xi$ is the supremum, hence $\xi=T$. As a 
consequence, $u$ is strictly convex on $(0,T_\alpha)$ which
implies {\it (ii)}. 
  
In order to show {\it (iii)} we notice that \eqref{g:odd} implies that 
solutions are symmetric about critical points and antisymmetric about zeros. Therefore, it suffices to  show that $u$ decreases until it attains a zero.
By the choice of $\alpha\in (0,\alpha_0)$ we have $u''(0)<0$ so that $u$ 
decreases on a right neighbourhood
of $0$. Exploiting \eqref{g:sign} and \eqref{eq:radial1D} we deduce that 
$u''(s)$ is negative whenever 
$0<u(s)<u(0)<\alpha_0$. As a consequence, we obtain that $u$ 
decreases as long as it remains positive. 
Moreover, it cannot be positive on $[0,\infty)$ since this would imply, 
thanks $0\leq u(r)\leq\alpha<\alpha_0$ and the assumptions \eqref{g:der},\eqref{g:sign},
\[
u''(r) +c(r)u(r)=0,\quad \text{with }c(r):=\frac{g(u(r))}{u(r)} \geq c_{0}>0.
\]
Hence, Sturm's comparison theorem (p.2 in \cite{Swa_comparison_and_oscillation})
ensures that $u$ vanishes somewhere, so that it cannot be positive in $[0,+\infty)$, a contradiction. Hence,
$u$ attains a zero and the proof is finished.
\end{proof}

Next, we consider the initial value problem \eqref{eq:equa} in the 
higher dimensional case $N\geq 2$. Again, we
may restrict our attention to the case $\alpha\geq 0$ and we
will denote with  $G(z)$ the primitive of the function $g(s)$, such that
$G(0)=0$. The following result furnishes the study of the solution set which are needed in the proof of
Again, the uniquely determined solution of the initial value problem
\eqref{eq:equa} will be denoted by $u_\alpha$ with maximal existence interval $(-T_\alpha,T_\alpha)$. 
\begin{remark}
There are many contributions concerning \eqref{eq:ge} in dimension
$N=1$, mainly related to some resonance phenomena.  In this
context, some ``Landesman-Lazer'' type conditions, joint
with suitable hypotheses on the nonlinearity $g$, are assumed 
in oder to obtain  existence of bounded, periodic or oscillating solution,
eventually with arbitrarily large $L^{\infty}$ norm, by
taking advantage of the presence on a forcing term in the equation
(see \cite{sove, ve} and the references therein).
Here the situation is different, as we do not need any monotonicity
assumption on $g$, nor the knowledge of the asymptotic behavior at infinity of $g$ is important, as it is  in \cite{sove, ve}. 
Moreover, our solutions satisfy a uniform $L^{\infty}$ bound, so that
the phenomenon we are dealing with is actually different from the
resonant one.

\end{remark}
\begin{lem}\label{lem:nD}
Let $N\geq 2$. Then the following holds: 
\begin{itemize}
\item[(i)] If $\alpha=\alpha_0\in \R$ then $u_\alpha\equiv \alpha_0$
 and if $\alpha=0$ then $u_\alpha\equiv 0$.
  \item[(ii)] If $\alpha>\alpha_0$ then $u_\alpha$ strictly increases to $+\infty$ on $[0,T_\alpha)$.
  \item[(iii)] If $0<\alpha <\alpha_0$ then $u_\alpha$ is oscillating, localized and satisfies
  \begin{equation} \label{eq:W1infty_bounds}
      \|u_\alpha\|_{L^\infty(\R)} = |\alpha| \quad\text{and}\quad
      \|u_\alpha'\|_{L^\infty(\R)} \leq \sqrt{2G(\alpha)} 
  \end{equation}
  as well as
  \begin{equation} \label{eq:bounds_at_infinity}
      c_\alpha r^{(1-N)/2} \leq |u_\alpha(r)|+|u_\alpha'(r)|+|u_\alpha''(r)| \leq C_\alpha r^{(1-N)/2} 
      \quad\text{for }r\geq 1
  \end{equation}
  for some $c_\alpha,C_\alpha>0$ depending on the solution but not on $r$.
  \end{itemize}
\end{lem}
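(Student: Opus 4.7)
The central tool is the radial energy $E(r) := \tfrac{1}{2} u_\alpha'(r)^2 + G(u_\alpha(r))$ with $G(z) := \int_0^z g(s)\,ds$, for which the ODE \eqref{eq:equa} yields $E'(r) = -\tfrac{N-1}{r} u_\alpha'(r)^2 \leq 0$, so that $E$ is non-increasing on $(0,\infty)$. Part (i) is immediate by uniqueness. For part (ii), evaluating \eqref{eq:equa} at $r=0$ (using $u_\alpha'(0)=0$) gives $u_\alpha''(0) = -g(\alpha)/N > 0$, so $u_\alpha' > 0$ near $0$. A first critical point $r_1 \in (0,T_\alpha)$ of $u_\alpha$ would force $u_\alpha''(r_1) \leq 0$ by left-sided monotonicity of $u_\alpha'$, whereas the ODE at $r_1$ gives $u_\alpha''(r_1) = -g(u_\alpha(r_1)) > 0$ since $u_\alpha(r_1) > \alpha > \alpha_0$ and \eqref{g:sign}. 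Hence no such point exists and $u_\alpha' > 0$ throughout $(0,T_\alpha)$. A finite limit $L < \infty$ of $u_\alpha$ is ruled out by the same dichotomy: it would force $u_\alpha'(r) \to 0$ and then $u_\alpha''(r) \to -g(L)/N > 0$ via the ODE, contradicting $u_\alpha' \to 0$.

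For part (iii), the $L^\infty$ bounds \eqref{eq:W1infty_bounds} follow from monotonicity of $E$ together with the strict monotonicity of $G$ on $[0,\alpha_0]$: since $G(\alpha) < G(\alpha_0)$ and $E(r) \leq G(\alpha)$, one obtains $|u_\alpha(r)| \leq \alpha$ and $|u_\alpha'(r)| \leq \sqrt{2G(\alpha)}$ for all $r$, and in particular $T_\alpha = +\infty$. Because $|u_\alpha| \leq \alpha < \alpha_0$, the quotient $g(u_\alpha)/u_\alpha$ (extended by continuity to $g'(0)$ at zeros of $u_\alpha$) stays bounded below by some $g_0 > 0$. The Liouville substitution $v(r) := r^{(N-1)/2} u_\alpha(r)$ transforms \eqref{eq:equa} into
\[ v''(r) + \left[\tfrac{g(u_\alpha(r))}{u_\alpha(r)} + \tfrac{(N-1)(3-N)}{4r^2}\right] v(r) = 0, \]
whose potential is $\geq g_0/2$ for all sufficiently large $r$, so Sturm's comparison theorem against $w'' + (g_0/2)w = 0$ yields infinitely many zeros of $v$, and hence of $u_\alpha$, with consecutive zeros at distance $O(1)$, so that $r_k = O(k)$. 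Localization $u_\alpha(r) \to 0$ then follows: $E$ is bounded below by $0$ since $G \geq 0$ on $[-\alpha_0,\alpha_0]$, so $E(r) \to E_\infty \geq 0$; integrating $E'$ gives $\int_0^\infty r^{-1} u_\alpha'(r)^2\,dr < \infty$; but if $E_\infty > 0$ then at every critical point $r_k$ of $u_\alpha$ one has $G(u_\alpha(r_k)) \geq E_\infty$, so $|u_\alpha(r_k)| \geq G^{-1}(E_\infty) > 0$, and a Cauchy--Schwarz argument between consecutive zeros and critical points yields $\int_{r_k}^{r_{k+1}} r^{-1} u_\alpha'(r)^2\,dr \geq c/k$, which is not summable.

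The sharp asymptotic bounds \eqref{eq:bounds_at_infinity} form the most delicate step. Now that $u_\alpha(r) \to 0$ is known, the potential in the $v$-equation converges to $g'(0) > 0$ at infinity, and one can pass to an auxiliary energy $\mathcal{E}(r) := \tfrac{1}{2} v'(r)^2 + \tfrac{1}{2} g'(0) v(r)^2$ whose derivative is a lower-order remainder; this forces $\mathcal{E}$ (and hence $v(r)^2 + v'(r)^2$) to remain bounded above and, crucially, bounded strictly below by a positive constant as $r \to \infty$. Pulling back through $v = r^{(N-1)/2} u_\alpha$ and using the ODE to convert bounds on $v,v'$ into bounds on $u_\alpha, u_\alpha', u_\alpha''$ yields both estimates in \eqref{eq:bounds_at_infinity}. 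The principal obstacle is exactly this last step, namely the \emph{lower} bound: one must rule out the possibility that the nonlinear remainder $r^{(N-1)/2}[g(u_\alpha) - g'(0) u_\alpha]$ and the $r^{-2}$ correction in the $v$-equation conspire to drive the asymptotic amplitude of $v$ to zero, which requires a quantitative control on the decay of $u_\alpha$ sufficient to ensure integrability of the perturbation and thus the applicability of a Pr\"ufer/Levinson-type asymptotic analysis.
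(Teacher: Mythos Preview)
Your outline for (i), (ii), and the oscillation/localization part of (iii) follows essentially the same route as the paper: the decreasing energy $E=\tfrac12 Z$, the Liouville substitution $v=r^{(N-1)/2}u$, Sturm comparison, and the harmonic-series argument showing $E_\infty=0$. Your Cauchy--Schwarz version of the localization step is a clean repackaging of the paper's argument (which instead passes to a subsequential limit periodic solution to extract the uniform lower bound on $|u'|$), and it is correct.

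The genuine gap is in the final step, and you have accurately located it yourself: you state that the lower bound in \eqref{eq:bounds_at_infinity} ``requires a quantitative control on the decay of $u_\alpha$ sufficient to ensure integrability of the perturbation,'' but you do not supply that control. Two concrete ingredients are missing. First, the paper inserts an \emph{intermediate} decay estimate $|u|,|u'|,|u''|\le C_\varepsilon r^{(1-N)/2+\varepsilon}$ (obtained as in \cite{GuiZhou}, Lemma~4.2) between localization and the sharp bounds; this is exactly the ``quantitative control'' you allude to. Second, and more subtly, your linearized energy $\mathcal E=\tfrac12 v'^2+\tfrac12 g'(0)v^2$ is not the right object in low dimensions: its derivative is $vv'\,[g'(0)-g(u)/u]+O(r^{-2})\mathcal E$, and since $|g'(0)-g(u)/u|\le C|u|^\sigma$ by \eqref{g:reg}, the Gr\"onwall/Levinson step needs $\int^\infty |u|^\sigma<\infty$, which \emph{fails} for $N=2,3$ even after the sharp upper bound. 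The paper instead uses the nonlinear energy
\[
\psi(r)=v'(r)^2+2r^{N-1}G(u(r)),
\]
whose derivative produces the combination $r^{N-2}\big(2G(u)-u g(u)\big)$. Because $|2G(u)-ug(u)|\le C|u|^{2+\sigma}$ and $r^{N-1}u^2\le C\psi$, one gains an extra factor $r^{-1}$ and obtains $|\psi'|\le C\,r^{-1}|u|^\sigma\,\psi$, which \emph{is} integrable once the rough decay $|u|\le C_\varepsilon r^{(1-N)/2+\varepsilon}$ is fed in. This simultaneously gives the upper and lower bounds on $\psi$, hence on $|u|+|u'|+|u''|$. So the fix is: replace $\mathcal E$ by $\psi$ and precede the Gr\"onwall argument by the $r^{(1-N)/2+\varepsilon}$ estimate.
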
 
\begin{proof}
The existence and uniqueness
of a twice continuously differentiable solution $u_\alpha:(-T_\alpha,T_\alpha)\to\R$ can be deduced from 
Theorem~1 and Theorem~2 in \cite{ReiWal_radial}. 
We write again $u,T$ in place of $u_\alpha,T_\alpha$. The proof of {\it 
(i)} is direct and assertion  {\it (ii)} follows similar to the one-dimensional 
case. Indeed, note that $u''(0)>0$ because of
\begin{align*}
Nu''(0) = \lim_{r\to 0^{+}}u''(r)+\tfrac{N-1}{r}u'(r)  =-g(u(0))=-
g(\alpha)>0.
\end{align*}
Then, letting $\xi:= \sup \{s\in (0,T):  u'(s)>0\}$,  it results
$\xi \in (0,T]$. Assuming by contradiction that $\xi<T$ and using that
$\alpha>\alpha_{0}$, from \eqref{g:sign} we obtain 
$$
\xi^{N-1}u'(\xi) = - \int_0^\xi t^{N-1}g(u(t))\,dt  >0
$$
which is impossible, i.e. $\xi=T$. Then,
\eqref{eq:equa},\eqref{g:sign} and the maximality of $T$ yield {\it (ii)}. 
The proof of {\it (iii)} is lengthy so that  it will be subdivided into four 
steps.

\textit{Step 1: $u$ decreases to a first zero.}\;  
For all $r>0$ such that $0<u<\alpha_0$ on $[0,r]$ we have 
$$
r^{N-1}u'(r) = -\int_{0}^{r} t^{N-1}g(u(t))\,dt< 0,
$$
showing that $u$ decreases as long as it remains positive,
as in the one-dimensional case. Moreover, the function $u$ can not 
remain positive on $[0,\infty)$ because otherwise $v(r):= r^{(N-1)/2}u(r)$ would be
  a positive solution of
\begin{equation} \label{eq:v}
v'' + c(r)v= 0\qquad\text{where } c(r)= 
\frac{g(u(r))}{u(r)} -\frac{(N-1)(N-3)}{4r^{2}}.
\end{equation}
As in the proof of Proposition \ref{prop:1D} we observe $c(r)\geq c_{0}
>0$ for sufficiently large $r$ so that Sturm's comparison theorem tells us 
that $v$ vanishes somewhere. This is a contradiction to the positivity of $u$ and thus $u$ attains a first 
zero.

\textit{Step 2: $u$ oscillates and satisfies \eqref{eq:W1infty_bounds}.}\;    
Let us first show that there are $0=r_0<r_1<r_2<r_3<\ldots$ such that all $r_{4j}$ are local maximizers, all
  $r_{4j+2}$ are local minimizers and all $r_{2j+1}$ are zeros of $u$. Moreover, we will find that all zeros
  or critical points of $u$ are elements of this sequence and
  \begin{align} \label{eq:oscillations}
    2G(u(r_0))>u'(r_1)^2>2G(u(r_2))>u'(r_3)^2>2G(u(r_4))>\ldots
  \end{align}
In order to prove this we consider the function 
\begin{equation}\label{eq:Z}
Z(r):= u'(r)^2 + 2G(u(r)).
\end{equation}
and we observe that $Z$ decreases as
\begin{align}\label{Z:dec}
Z'(r)= 2u'(r)(u''(r)+g(u(r)))= - \frac{2(N-1)}{r}u'(r)^2< 0.
\end{align}
The existence of a first zero $r_1>0=r_0$ of $u$ has been shown 
in   \textit{Step 1} and the strict monotonicity of $Z$
  implies $Z(r_1)<Z(r_0)$. Concerning the behaviour of $u$ on $[r_1,\infty)$ there are now three
alternatives:
  \begin{itemize}
	\item[(a)] $u$ decreases until it attains $-u(r_0)$
	\item[(b)] $u$ decreases on $[r_1,\infty)$ to some value $u_\infty\in [-u(r_0),0)$
	\item[(c)] $u$ decreases until it attains a critical point at some $r_2>r_1$ with $-u(r_0)<u(r_2)<0$.
  \end{itemize} 
Let us show that the cases (a) and (b) do not occur. Indeed, if 
there exists $r>r_0$ such that $u(r)=-u(r_0)$, then,
by \eqref{eq:Z} we deduce that 
$$
Z(r)\geq 2G(u(r))=2G(u(r_0))=Z(r_0)
$$ 
which is  forbidden by  \eqref{Z:dec}. 
Then, in particular \eqref{eq:W1infty_bounds} holds.
Hence, the case (a) is impossible. Let us now suppose that (b) holds. Then  $u_\infty$  has to be a
  stationary solution of \eqref{eq:equa} and thus $u_\infty=-\alpha_0=-u(r_0)$. But then 
$$
Z(r)\geq 2G(u(r))\to 2G(u_\infty)=2G(-u(r_0))=Z(r_0) \quad\text{as }r\to\infty
$$ 
which again  contradicts \eqref{Z:dec}. 
So the case (c) occurs and there
must be a critical point $r_2$ with
\[
2G(u(r_2)) = Z(r_2)<Z(r_1)=u'(r_1)^2<Z(r_0)=2G(u(r_0)),
\]
so that \eqref{eq:equa}, \eqref{g:sign} and \eqref{g:odd} yield
\[ 
0>u(r_2)>-u(r_0) \text{ and }u'(r_2)=0, u''(r_2)>0 .
\]
Hence, $r_2$ is a local minimizer. Using that $Z$ is decreasing we can 
now repeat  the argument to get a zero $r_3>r_2$, a local maximizer 
$r_4>r_3$, a zero $r_5>r_4$ and so on. By the strict monotonicity of $Z$ 
one obtains \eqref{eq:oscillations} and thus \eqref{eq:W1infty_bounds}. 
Notice that this reasoning also shows that there are no further zeros or
critical points. 

\textit{Step 3: $u$ is localized.} \; First we show   $u(r)\to
0$ as $r\to\infty$. Our proof is similar to the one of Lemma 4.1 in 
\cite{GuiZhou} and it will be presented for the convenience of the reader.  
Take the sequence of maximizers $\{r_{4j}\}$ and assume by 
contradiction that $u(r_{4j})\to z\in (0,\alpha_0)$. Then 
\eqref{eq:equa} and Ascoli-Arzel\`a Theorem imply that 
$u(\cdot+r_{4j})$ converges locally uniformly to the unique solution $w$ 
of \eqref{eq:radial1D} with $w(0)=z,w'(0)=0$.
Proposition  \ref{prop:1D}, {\it (iii)} implies that this solution $w$ is 
$T$-periodic with two zeroes at $T/4,3T/4$. As a consequence,
there exists  $\delta>0$ such that $|w'|^2\geq 2\delta$ on
$[T/4-2\delta,T/4+2\delta]$. Hence, for sufficiently large $j_0\in\N$ we 
have for $j\geq j_{0}$
\begin{align*}
u'(r_{4j}+r)^2 
\geq \delta 
\quad \text{for $r\in [T/4-\delta,T/4+\delta]$ and } \quad
r_{4(j+1)}-r_{4j} &\geq T-\delta .
\end{align*}   
From this we deduce for $j\geq j_{0}$
\begin{align}\label{eq:armo}
 u'(r)^2  &\geq \delta,   \qquad\text{for }r\in [r_{4j}+T/4-\delta,r_{4j}+T/4+\delta], 
\\
\label{r:in}
r_{4j} &\geq r_{4j_0}+(j-j_0)(T-\delta)   \qquad\text{for } j\geq j_{0}.
\end{align}
Then, for $k\geq j_0$ and $r> r_{4k}+T/4+\delta$ we may exploit 
\eqref{Z:dec} and  \eqref{eq:armo} to obtain
\begin{align*}
Z(r)
&=Z(0)-2(N-1) \int_0^r \frac{u'(t)^2}{t} \,dt 
\\
&  
\leq Z(0)-2(N-1)\sum_{j=j_0}^k \int_{r_{4j}+T/4-\delta}^{r_{4j}+T/4+\delta} \frac{u'(t)^2}{t}\,dt 
\\
&\leq Z(0)- 2(N-1)\delta^2 \sum_{j=j_0}^k \int_{r_{4j}+T/4-\delta}^{r_{4j}+T/4+\delta}  \frac{1}{t} \,dt 
\\
&= Z(0)- 2(N-1)\delta^2 \sum_{j=j_0}^k
\ln\left(\dfrac{r_{4j}+\frac{T}4+\delta}{r_{4j}+\frac{T}4-\delta}\right). 
\end{align*}
Let us fix  $c(\delta)>0$  such that  
$\ln(1+x)\geq c(\delta) x$  for $0\leq x\leq 2\delta/(r_{4j_0} +\frac{T}4-\delta)$. Then \eqref{r:in} implies 
 \begin{align*}
Z(r) 
&\leq Z(0)- 2(N-1)\delta^2 c(\delta) \sum_{j=j_0}^k \dfrac{2\delta}{r_{4j} +\frac{T}4-\delta} 
\\
&\leq Z(0)- 2(N-1)\delta^2 c(\delta)     
\sum_{j=j_0}^k \dfrac{2\delta}{r_{4j_0} + (j-j_0)(T-\delta) +\frac{T}4-\delta}. 
\end{align*}	
Choosing now $k,r$ sufficiently large we obtain that  $Z(r)\to-\infty$
 because the harmonic series diverges, but \eqref{eq:W1infty_bounds}implies that $Z(r)\geq 2G(u(r))\geq 0$, yielding a contradiciton.
As a consequence, $u(r_{4j})$ converges to zero as $j\to\infty$ and 
analogously we deduce that also $u(r_{4j+2})\to 0$. In the end, we 
obtain $u(r)\to 0$ as $r\to + \infty$.
\\
Since $Z$ is decreasing and nonnegative it follows that $Z(r)\to Z_\infty\in [0,Z(0))$ as $r\to\infty$.
Hence, by \eqref{eq:Z}, also $|u'|$ has a limit at infinity which 
must be zero because $u$ converges to 0.
    Finally, from the differential equation we deduce that $u''(r)\to 0$ as $r\to\infty$, i.e.
\begin{equation} \label{eq:asymptotics}
u(r),u'(r),u''(r)\to 0 \quad (r\to\infty).
\end{equation}
As in Lemma 4.2 in \cite{GuiZhou} we get that for any $\eps>0$  there exists $C_{\eps}>0$
such that
\begin{equation} \label{eq:asymptoticsII}
|u(r)|,|u'(r)|,|u''(r)|\leq C_\eps r^{\frac{1-N}{2}+\eps} 
\qquad (r\geq 1)  .
\end{equation}

\textit{Step 4: Proof of \eqref{eq:bounds_at_infinity}.}\; Slightly generalizing the approach from
the proof of Theorem~4 in \cite{evwe} we study the function 
\begin{equation}\label{def:psi}
\psi(r):= v'(r)^2+ 2r^{N-1}G(u(r)) , \quad \text{where }
\;
v(r)=r^{(N-1)/2}u.
\end{equation}
Using the function $c$ from \eqref{eq:v} and taking into account \eqref{eq:equa}, 
we obtain that $\psi$ satisfies the following differential equation
\begin{align*} 
\psi'(r)
&= 2 v'(r)\left[-c(r)v(r) \right] + 2r^{(N-1)/2}g(u(r)) \left[v'(r)-
\frac{N-1}{2}r^{(N-3)/2}u(r)\right] 
\\
&\quad + 2(N-1)r^{N-2}G(u(r))   
\\
&=   (N-1)r^{N-2}\left(2G(u(r))- u(r)g(u(r))\right)   + \frac{ (N-1)(N-3)}{2r^2} v(r)v'(r).
\end{align*}
Taking into account \eqref{eq:asymptotics} and using \eqref{g:der} and \eqref{g:sign} we obtain that there exist
 $C,\,r_{0}\in (0,+\infty)$ such that
\begin{equation}\label{ineq:G}
\dfrac{u(r)^2}{G(u(r))}\leq  C\qquad \forall\,r\geq r_{0}.
\end{equation}
Then, exploiting \eqref{g:reg} and
\eqref{eq:asymptoticsII}, we find  
positive numbers $C',C'',r^*$ such that,
for  all $r\geq r^*$, it results
\begin{align*}
\begin{aligned}
 &\hspace{-1cm} \big| (N-1)r^{N-2}\left(2G(u(r))- u(r)g(u(r))\right) \big| \\  
 &
\leq \dfrac{(N-1)C}{2r} \frac{|2G(u(r))-u(r)g(u(r))|}{u(r)^2} \cdot 2r^{N-1}G(u(r)) \\
 &\leq \frac{C'}{r}|u(r)|^{\sigma} \psi(r) \\  
 &\leq C'' r^{-1+(\frac{1-N}{2}+\eps)\sigma} \psi(r).
\end{aligned}
\end{align*} 
Moreover, using \eqref{def:psi} and \eqref{ineq:G}, we get
\begin{align*}
\Big|  \frac{ (N-1)(N-3)}{2r^2}  v(r)v'(r) \Big| 
&\leq \frac{|(N-1)(N-3)|}{r^2}\cdot (v(r)^2+v'(r)^2) \\ 
&\leq \frac{|(N-1)(N-3)}{r^2} \cdot (Cr^{N-1}G(u(r))+v'(r)^2) \\
&\leq \frac{|(N-1)(N-3)|(C+1)}{r^2} \cdot \psi(r).
\end{align*}
This yields $\left|\psi'(r) \right|\leq a(r)\psi(r)$ for $r\geq r^*$ and some positive integrable function
$a$.
Dividing this inequality by the positive function $\psi(r)$ and integrating the resulting inequality over
	$[r^*,\infty)$ shows that $\psi$ is bounded from below and from above by a positive number. From this we
	obtain the lower and upper bounds \eqref{eq:bounds_at_infinity} and the proof is finished.
\end{proof}
        
We are now ready to give the proof of Theorem \ref{th:nD}.
       
\textit{Proof of Theorem~\ref{th:nD}}\; 
Let us define the set
$$
\mathcal{C} = \{ u_\alpha(|\cdot|)\in C^2(\R^N) : |\alpha|<\alpha_0\}$$ 
where $u_\alpha$ denotes the unique solution of the initial value problem 
\eqref{eq:equa}.
The set $\mathcal{C}$ is a subset of  $C^2(\R^N)$, and it is a 
continuum thanks to the Ascoli-Arzel\`a Theorem.
From Lemma~\ref{lem:nD} we obtain that all elements of 
$\mathcal{C}$ are oscillating localized solutions satisfying  \eqref{eq:W1infty_bounds} and
\eqref{eq:bounds_at_infinity}. \qed

\begin{remark}   
Let us mention that an analogous result to Theorem \ref{th:nD}
in Theorem 1 \cite{GuiLuo_II} and it is applied 
to a more restrictive class of nonlinearities.
Moreover, the above theorem is related to Theorem 4 in 
\cite{evwe} but we do not need their assumption $(g2)$. Actually, this hypothesis is not satisfied
in our model cases $g=g_1$ or $g=g_2$.
\end{remark}

\begin{remark}
The arguments from the proof of Theorem \ref{th:nD} also show the existence of oscillating localized 
solutions to initial value problems which are not of nonlinear Helmholtz type. For instance, one can 
treat concave-convex problems such as
\begin{equation} \label{eq:ABC}
-\Delta u  = \lambda |u|^{q-2}u + \mu |u|^{p-2}u \quad \text{in }\R^N,
\end{equation}
for $1<q<2<p<\infty$ with $\lambda>0,\mu\in\R$, see for instance \cite{ABC_combined_effects} or 
\cite{bep} for corresponding results on a bounded domain with homogeneous Dirichlet boundary conditions.
The existence of solutions is provided by Theorem 1 in \cite{ReiWal_radial} so that the steps 1,2,3  
are proven in the same way as above and we obtain infinitely many radially symmetric, oscillating, localized, solutions of
\eqref{eq:ABC}.
\end{remark}

\begin{remark}
  Using nonlinear oscillation theorems instead of Sturm's comparison theorem we can even extend
  the above observation towards superlinear nonlinearities $g$ satisfying $\lambda |z|^q\leq g(z)z\leq \Lambda |z|^q$ for 
  $$
      2<q\leq  \frac{2(N+1)}{N-1},N\in\{1,2,3\}
      \quad\text{or}\quad 
      2<q\leq \frac{2(N-1)}{N-2},N\geq 4.
  $$
  Indeed, in the first case the function $c$ from \eqref{eq:v} satisfies the estimate $c(r)\geq
  r^{(1-N)(q-2)/2}|v(r)|^{q-2}$ so that Atkinson's oscillation criterion applies, see the first line and
  third column of the table on p.153 in \cite{Swa_semilinear}. In the second case Noussair's oscillation
  criterion result can be used in order finish step~1,  see the third line and
  third column of the table on p.153 in \cite{Swa_semilinear}.
 
  \end{remark}

  \begin{remark}
    If $z\mapsto g(z)/z$ is decreasing, then one can
    show that the first zero of $u_\alpha$ is smaller than the first zero of $u_{\tilde\alpha}$ whenever
    $0<\alpha<\tilde\alpha<\alpha_0$. Indeed, we set $u:=u_\alpha, v:=u_{\tilde \alpha}$. Then the interval
     $$
      I:= \{t>0: u(s)>v(s)>0 \text{ for all }s\in (0,t)\}
    $$ 
    is open, connected and nonempty and thus $I=(0,r^*)$ for some $r^*>0$. On its right boundary we either
    have $u(r^*)=v(r^*)\geq 0$ or $u(r^*)>v(r^*)=0$; so it remains to exclude the first possibility. Using $u>v>0$
    on $I$ and \eqref{eq:equa} we have
    \begin{equation}\label{eq:1}
      \big(r^{N-1}(u'v-v'u)\big)'
      = r^{N-1}uv\Big(\frac{g(v)}{v}-\frac{g(u)}{u}\Big) > 0\quad\text{on }I.
	\end{equation}
	Integrating \eqref{eq:1} from $0$ to $r^*$ the assumption $u(r^*)=v(r^*)>0$ leads to
    $$
      0 < (u'v-v'u)(r^*) = u(r^*)(u-v)'(r^*),\quad \text{hence} \quad(u-v)'(r^*)> 0.
    $$
    On the other hand $u-v>0$ on $I=(0,r^*)$ and $(u-v)(r^*)=0$ implies $(u-v)'(r^*)\leq 0$, a contradiction.
    Thus $u(r^*)>v(r^*)=0$ so that the first zero of $v$ comes before the first zero of $u$. 
  \end{remark}
%

 \subsection{The nonautonomous case}
  
  In this section we generalize Theorem~\ref{th:nD} to a nonautonomous setting.
  Our aim is to identify mild assumptions on a nonautonomous nonlinearity $g$ that ensure the existence
  of a continuum of oscillating localized solutions of the initial value problems
  \begin{equation}\label{eq:radial_nonautonomous}
    -u'' - \dfrac{N-1}{r} u = g(r,u),\qquad u(0)=\alpha, \; u'(0)=0
  \end{equation}
  that behave like $r^{(1-N)/2}$ at infinity in the sense of \eqref{eq:bounds_at_infinity}. 
  Before formulating such assumptions and stating the corresponding existence result let us mention that our
  result applies to the nonlinearities \eqref{g1g2_nonautonomous1},\eqref{g1g2_nonautonomous2}
  under suitable conditions on the coefficient functions. This will be seen in
  Corollary~\ref{cor:NLH_nonauto} and Corollary~\ref{cor:NLH2_nonauto} at the end of this section. Our existence results for
  \eqref{eq:radial_nonautonomous} will be proven assuming that
  \begin{equation}
    \label{ggen:reg}  \text{$g\in C([0,+\infty)\times \R,\R)$ is continuously differentiable w.r.t. $r$.}
  \end{equation}
  Moreover, we suppose that there exist positive numbers $\alpha_*,\alpha^*,\lambda,\Lambda$
  and a locally Lipschitz continuous function $g_\infty:\R\to\R$ such that 
  \begin{align}
 	\label{ggen:lim}
	\lim_{r\to\infty}g(r,\cdot)&= g_\infty(\cdot) &&
	\text{uniformly on $[-\alpha_*,\alpha^*]$} 	\\
	\label{ggen:sign} g_r(r,z)z&\leq 0&&\text{on }[0,+\infty)\times [-\alpha_*,\alpha^*], \\
	\label{ggen:growth} \lambda z^2 \leq g_\infty(z)z&\leq g(r,z)z \leq \Lambda z^2
       &&\text{on }[0,+\infty)\times [-\alpha_*,\alpha^*].
  \end{align}
  These assumptions will allow us to prove the mere existence of an oscillating localized solution.
  In order to show the desired asymptotic behaviour we need some extra condition ''at infinity''
  where $r$ is large and the solution itself is small: We will assume that there exist $\eps,\sigma,C>0$
  and some integrable function $k$ such that
  \begin{align}
	\label{ggenasy1}|2G(r,z)- zg(r,z)| &\leq Cz^2 |\ln(z)|^{-1-\sigma},\qquad |z|\leq \eps,\, r\geq \eps^{-1} \\
	\label{ggenasy2}g_r(r,z)z &\geq -k(r) z^2,\quad\qquad\qquad |z|\leq \eps,r\geq \eps^{-1}.
  \end{align}
  These assumptions are rather technical but can be verified easily in concrete situations as we show in the
  proof of Corollary~\ref{cor:NLH_nonauto}. Let us remark that
  our assumptions \eqref{g:reg},\eqref{g:odd},\eqref{g:der}\eqref{g:sign} from the
  autonomous case (for any choice $\alpha^*=\alpha_*\in (|\alpha|,\alpha_0)$) are more restrictive than the
  assumptions used above. In particular, the following theorem generalizes our autonomous result. 
  
  \begin{thm} \label{thm:general} 
Let $N\geq 2$. Moreover assume  \eqref{ggen:lim},\eqref{ggen:sign},\eqref{ggen:growth} as
  well as
\begin{equation} \label{eq:choice_alpha}
G(0,\alpha)\leq \min\{ G_\infty(-\alpha_*),G_\infty(\alpha^*)\}
\quad\text{for }\alpha\in [-\alpha_*,\alpha^*].
\end{equation}
Then there is an oscillating, localized solution $u$ of \eqref{eq:radial_nonautonomous} that satisfies
$u(0)=\alpha$ as well as
    \begin{equation} \label{eq:W1infty_bounds_generalcase}
      \|u\|_{L^\infty(\R)} \leq \max\{-\alpha_*,\alpha^*\}\quad\text{and}\quad
      \|u'\|_{L^\infty(\R)} \leq \sqrt{2G(0,\alpha)}. 
    \end{equation}
Moreover, if \eqref{ggenasy1} and \eqref{ggenasy2} hold, 
then we can find $c,C>0$ such that 
\begin{equation} \label{eq:bounds_at_infinity_generalcase}
      c r^{(1-N)/2} \leq |u(r)|+|u'(r)|+|u''(r)| \leq C r^{(1-N)/2} 
      \quad\text{for }r\geq 1.
    \end{equation}
  \end{thm}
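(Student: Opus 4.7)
The plan is to adapt the four-step scheme of Lemma~\ref{lem:nD}, swapping the conserved autonomous energy for a strictly decaying one and using the limit equation $-w''=g_\infty(w)$ as a replacement for the oddness of $g$. Set $G(r,z):=\int_0^z g(r,s)\,ds$ and
$$
Z(r):=u'(r)^2+2G(r,u(r)),\qquad Z'(r)=-\tfrac{2(N-1)}{r}u'(r)^2+2G_r(r,u(r)).
$$
Hypothesis \eqref{ggen:sign} integrates to $G_r(r,z)\leq 0$ on $[-\alpha_*,\alpha^*]$, so $Z$ strictly decreases along any solution remaining in this strip; combined with $G\geq 0$ (from \eqref{ggen:growth}) this immediately gives $\|u'\|_\infty\leq\sqrt{2G(0,\alpha)}$. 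For the sup bound on $u$ I argue by contradiction: if $u(r_0)=\alpha^*$, then integrating the pointwise inequality $g_\infty(s)\leq g(r,s)$ for $s>0$ (another consequence of \eqref{ggen:growth}) yields $G(r_0,\alpha^*)\geq G_\infty(\alpha^*)\geq G(0,\alpha)$ thanks to \eqref{eq:choice_alpha}, so $Z(r_0)\geq Z(0)$, contradicting the strict decay of $Z$. A symmetric argument excludes the value $-\alpha_*$, giving \eqref{eq:W1infty_bounds_generalcase} and global existence on $[0,\infty)$.

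For oscillation and localization I follow Steps 1--3 of Lemma~\ref{lem:nD}. The first zero comes from Sturm's theorem applied to $v:=r^{(N-1)/2}u$ with coefficient $c(r):=g(r,u(r))/u(r)-(N-1)(N-3)/(4r^2)$, which by \eqref{ggen:growth} satisfies $c(r)\geq \lambda/2$ for $r$ large. The iteration producing alternating extrema and zeros carries through using only the strict monotonicity of $Z$, and so does not rely on oddness. The delicate point is proving $u(r_{4j})\to 0$: if $u(r_{4j})\to z\in(0,\alpha^*)$ along the local maxima, I extract a locally uniform limit $w$ of the translates $u(\cdot+r_{4j})$ via Arzel\`a--Ascoli and \eqref{ggen:lim}; it solves $-w''=g_\infty(w)$ with $w(0)=z$, $w'(0)=0$. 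Energy conservation for $w$, combined with \eqref{ggen:growth} and \eqref{eq:choice_alpha} (which ensure $G_\infty(\pm\alpha_*)\geq G_\infty(z)$ so that the level set is closed and bounded in phase space), forces $w$ to be periodic and oscillating with $|w'|^2\geq\delta$ on an interval of length $\sim\delta$ in each period. The harmonic-series argument of Step~3 then pushes $Z(r)\to-\infty$, contradicting $Z\geq 2G\geq\lambda u^2\geq 0$. A monotonic-decay-to-nonzero-limit scenario is excluded by Barb\u alat's lemma on $u'$ combined with $g(r,L)\to g_\infty(L)\neq 0$. Hence $u,u',u''\to 0$ as $r\to\infty$ and the slack bound $|u(r)|\leq C_\eps r^{(1-N)/2+\eps}$ follows as in \eqref{eq:asymptoticsII}.

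Under the extra hypotheses \eqref{ggenasy1},\eqref{ggenasy2} the refined asymptotics \eqref{eq:bounds_at_infinity_generalcase} are obtained by studying $\psi(r):=v'(r)^2+2r^{N-1}G(r,u(r))$. Its derivative equals the two terms familiar from the autonomous proof plus a new one, $2r^{N-1}G_r(r,u(r))$. For this new term, \eqref{ggenasy2} integrates to $|G_r(r,u)|\leq\tfrac12 k(r)u^2$, so $|2r^{N-1}G_r(r,u)|\leq k(r)v^2\leq \lambda^{-1}k(r)\psi$, using $G(r,z)\geq G_\infty(z)\geq \lambda z^2/2$ (once more from \eqref{ggen:growth}). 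The first two terms are handled as in the autonomous case, but with \eqref{ggenasy1} replacing the H\"older factor $|u|^\sigma$ by $|\ln|u(r)||^{-1-\sigma}$; combined with the preliminary bound $|u(r)|\leq C_\eps r^{(1-N)/2+\eps}$ this yields an integrable factor against $r^{-1}$. Altogether $|\psi'(r)|\leq a(r)\psi(r)$ for some $a\in L^1([r^*,\infty))$, so $\psi$ remains pinched between positive constants, proving \eqref{eq:bounds_at_infinity_generalcase}. The hardest step is the localization in the previous paragraph: without oddness the periodicity of the limit profile $w$ is no longer automatic and must be re-established by direct phase-plane analysis, checking that both turning points of $w$ stay inside $[-\alpha_*,\alpha^*]$ so that \eqref{ggen:growth} applies throughout.
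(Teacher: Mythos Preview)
Your proof is correct and follows essentially the same four-step scheme as the paper's own argument: the decaying energy $Z$, Sturm comparison for the first zero, the Arzel\`a--Ascoli limit-profile argument for localization, and the Gronwall-type bound on $\psi$ for the sharp asymptotics. You are in fact more explicit than the paper on two points it glosses over---the confinement of $u$ in $(-\alpha_*,\alpha^*)$ via the $Z$-contradiction, and the periodicity of the limit profile $w$ in the absence of oddness of $g_\infty$---and you handle both correctly.
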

  \begin{proof}
    The proof of our result follows the same argument of the proof of Theorem~\ref{th:nD}, so we only mention the
    main differences. For simplicity  we only treat the case $\alpha>0$ with \eqref{eq:choice_alpha}.
    The existence of a maximally extended solution of \eqref{eq:radial_nonautonomous} follows from a Peano
    type existence theorem for singular initial value problems, see Theorem~1 in~\cite{ReiWal_radial}.

    \smallskip
    
    Step 1 is proven as in the autonomous case where the function $c$ from \eqref{eq:v} has to be replaced by 
    $c(r)= g(r,u(r))/u(r) - (N-1)(N-3)/4r^2$. Assumption \eqref{ggen:growth} ensures that $c$ is bounded from below by
    a positive constant as long as $0\leq u(r)<\alpha$ so that $u$ has to attain a first zero. In step 2 one
    shows that $Z(r):=u'(r)^2 + 2G(r,u(r))$ is nondecreasing due to $G_r(r,u(r))\leq 0$ for $-\alpha_*\leq
    u(r)\leq \alpha^*$, see \eqref{ggen:sign}. Arguing as in the autonomous case we find that $u$ decreases
    until it attains a local minimum at some $r_2>r_1$ with $-\alpha_*<u(r_2)<0$. More precisely one finds a sequence $(r_j)$
    such that all $r_{2j}$ are critical points and all $r_{2j+1}$ are zeros of $u$ with the
    additional property (the counterpart to \eqref{eq:oscillations}) 
    $$
	  2G(r_0,u(r_0)) >u'(r_1)^2 > 2G(r_2,u(r_2))> u'(r_3)^2 > 2G(r_4,u(r_4))>\ldots.
	$$
	This and $G(r_0,u(r_0))=G(0,\alpha)$ yields the $L^\infty$-bounds for $u'$ whereas
	the $L^\infty$-bounds follow from $-\alpha_*<u(r_{2j})<\alpha^*$ for all $j\in\N$. Hence, \eqref{eq:W1infty_bounds_generalcase} is
	proved so that step~2 is finished. Step~3 is the same as in the proof of Theorem~\ref{th:nD}. Since the
  	reasoning of Lemma 4.2 in \cite{GuiZhou} may be adapted to our nonautonomous (but asymptotically
  	autonomous) problem we also find \eqref{eq:asymptoticsII}, i.e. 
  	\begin{equation}\label{asy:nonauto} 
      |u(r)|,|u'(r)|,|u''(r)|\leq C_\eps r^{\frac{1-N}{2}+\eps} \qquad (r\geq 1)  
    \end{equation}
  	In step 4 we use \eqref{ggenasy1},\eqref{ggenasy2} in order to study	the asymptotics of the function 
$$ 
\psi(r):= v'(r)^2+ 2r^{N-1}G(r,u(r)) 
$$ 
where $v(r):=r^{(N-1)/2}u(r)$. One shows
\begin{align*} 
\psi'(r)	  
&= 	  2r^{N-1}G(r,u(r))\Big( \frac{N-1}{r}
\frac{2G(r,u(r))-  u(r)g(r,u(r))}{2G(r,u(r))}  + \frac{ G_r(r,u(r))}{G(r,u(r))}\Big)
\\
&\quad + \frac{ (N-1)(N-3)}{2r^2} v(r)v'(r).   
\end{align*}
For sufficiently large $r\geq r^*$ we get estimates $|G_r(r,u(r))|\leq k(r) 
G(r,u(r))$. Moreover, with an analogous inequality as in \eqref{ineq:G} 
as well as \eqref{ggenasy1},\eqref{asy:nonauto}  we get
\begin{align} \label{eq:est_asymptotics}
\begin{aligned}
\frac{2G(r,u(r))-u(r)g(r,u(r))}{2G(r,u(r))}
	  &\leq \frac{2G(r,u(r))-u(r)g(r,u(r))}{\lambda u(r)^2} \\
	  &\leq C|\ln(u(r))|^{-1-\sigma} \\ 
	  &\leq C'\ln(r)^{-1-\sigma}
	\end{aligned}
	\end{align}
	so that we may find as in the autonomous case a positive integrable function $a$ such that
	$|\psi'(r)|\leq a(r)\psi(r)$.
	This shows that $\psi$ is bounded from below and from above by a positive number.  
From this and 
\begin{equation} \label{eq:estimateG(r,z)}
\lambda	z^2\leq 2 G_\infty(z)\leq 2G(r,z)\leq \Lambda z^2 \quad\text{on
}[0,+\infty)\times [-\alpha_*,\alpha^*],
    \end{equation} 
    which is a consequence of \eqref{ggen:growth}, we obtain the lower and upper bounds
	\eqref{eq:bounds_at_infinity_generalcase} and the proof is finished.
  \end{proof}
  
  Finally let us apply Theorem~\ref{thm:general} to the special nonlinearities $g_1,g_2$ given in
  \eqref{g1g2_nonautonomous1},\eqref{g1g2_nonautonomous2}. We obtain the following results.

  \begin{cor} \label{cor:NLH_nonauto}
     Let $N\geq 2,p>2$ and suppose that $k,Q\in C^1([0,+\infty),\R)$ are nonincreasing
     functions with limits $k_\infty>0$ and $Q_\infty\in\R$, respectively. Then
     there is a nonempty open interval $I$ containing 0 and a continuum $\mathcal{C}= \{ u_\alpha \in
     C^2(\R^N): \alpha\in I \}\subseteq \,C^2(\R^N)$ consisting of radially symmetric oscillating classical
     solutions of the equation 
    $$
       -\Delta u - k(|x|)^2 u = Q(|x|) |u|^{p-2}u \quad \text{in }\R^N
    $$
    having the properties \eqref{eq:W1infty_bounds_generalcase},\eqref{eq:bounds_at_infinity_generalcase}
    stated in Theorem~\ref{thm:general}. In case $Q_\infty\geq 0$ we have $I=\R$.
  \end{cor}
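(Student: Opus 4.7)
The plan is to apply Theorem~\ref{thm:general} to the nonlinearity
$g(r,z) = k(r)^2 z + Q(r)|z|^{p-2}z$ with limiting profile
$g_\infty(z) = k_\infty^2 z + Q_\infty |z|^{p-2}z$. The regularity hypothesis \eqref{ggen:reg} is immediate from $k,Q\in C^1$, and the uniform convergence \eqref{ggen:lim} on any bounded $z$-interval follows from $k(r)\to k_\infty$ and $Q(r)\to Q_\infty$. For the monotonicity condition \eqref{ggen:sign} I would compute
$g_r(r,z)z = 2k(r)k'(r)z^2 + Q'(r)|z|^p$
and note that both summands are nonpositive since $k$ and $Q$ are nonincreasing.

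The choice of the bounds $\alpha_*,\alpha^*$ entering \eqref{ggen:growth} depends on the sign of $Q_\infty$. The upper bound $g(r,z)z \leq \Lambda z^2$ holds on every bounded $z$-interval because $g(r,z)z \leq k(0)^2 z^2 + |Q(0)|\max(\alpha_*,\alpha^*)^{p-2}z^2$. The intermediate inequality $g_\infty(z)z \leq g(r,z)z$ follows at once from $k(r)\geq k_\infty$ and $Q(r)\geq Q_\infty$. For the lower bound $\lambda z^2 \leq g_\infty(z)z$: if $Q_\infty\geq 0$ I may take $\lambda = k_\infty^2$ and allow $\alpha_*,\alpha^*$ arbitrarily large; if $Q_\infty<0$ I restrict $\alpha_*,\alpha^*$ so that $k_\infty^2 + Q_\infty\max(\alpha_*,\alpha^*)^{p-2} > \lambda > 0$. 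In either case \eqref{eq:choice_alpha} is a continuous condition on $\alpha$ that is strictly satisfied at $\alpha=0$ and thus defines an open interval $I\ni 0$ of admissible initial data. When $Q_\infty\geq 0$, letting $\alpha_*=\alpha^* \to \infty$ forces $I = \R$, as $G_\infty(\pm\alpha^*)\to\infty$.

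To conclude the pointwise decay in \eqref{eq:bounds_at_infinity_generalcase} I still need the technical asymptotic conditions \eqref{ggenasy1} and \eqref{ggenasy2}, and this is the main technical obstacle. A direct computation gives
$2G(r,z) - zg(r,z) = \frac{2-p}{p}\, Q(r)\, |z|^p$,
which is $O(|z|^p)$ uniformly in $r$; since $p>2$, the elementary inequality $|z|^p \leq |z|^2\,|\ln |z||^{-1-\sigma}$ holds for all small $|z|$ and any $\sigma>0$, yielding \eqref{ggenasy1}. For \eqref{ggenasy2}, using $|z|\leq\alpha^*$ I would estimate
$g_r(r,z)z \geq 2k(r)k'(r)z^2 + Q'(r)(\alpha^*)^{p-2} z^2$
and set $\ell(r) := -2k(r)k'(r) - Q'(r)(\alpha^*)^{p-2}$; this is integrable on $(0,\infty)$ because
$\int_0^\infty k(-k')\,dr = \tfrac12(k(0)^2-k_\infty^2)$ and $\int_0^\infty(-Q')\,dr = Q(0)-Q_\infty$
are both finite.

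With every hypothesis checked, Theorem~\ref{thm:general} produces, for each $\alpha\in I$, a radially symmetric oscillating localized solution $u_\alpha$ satisfying \eqref{eq:W1infty_bounds_generalcase} and \eqref{eq:bounds_at_infinity_generalcase}. The family $\{u_\alpha(|\cdot|) : \alpha\in I\}$ then forms a continuum in $C^2(\R^N)$ by continuous dependence of the singular radial ODE on its initial datum combined with Ascoli--Arzel\`a, as already invoked at the end of the proof of Theorem~\ref{th:nD}. The truly delicate point is the case $Q_\infty<0$: one must confirm that the interval $I$ cut out by \eqref{eq:choice_alpha} from the restricted range $[-\alpha_*,\alpha^*]$ remains a nontrivial neighborhood of $0$, which is guaranteed by the strict positivity of $k_\infty$ together with $G(0,0)=0$ and continuity of $G(0,\cdot)$.
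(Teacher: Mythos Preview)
Your proof is correct and follows essentially the same route as the paper: verify the hypotheses \eqref{ggen:reg}--\eqref{ggen:growth}, \eqref{ggenasy1}, \eqref{ggenasy2} of Theorem~\ref{thm:general} for $g(r,z)=k(r)^2z+Q(r)|z|^{p-2}z$, then invoke unique solvability and Ascoli--Arzel\`a for the continuum. The only cosmetic difference is that the paper identifies the interval explicitly as $I=\{\alpha:G(0,\alpha)<\sup_{\R}G_\infty\}$ and, for each $\alpha\in I$, chooses $\alpha_*=\alpha^*$ so that $G_\infty(\pm\alpha^*)=G(0,\alpha)$, whereas you argue more qualitatively; both lead to the same conclusion.
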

\begin{proof}
We set
$$
g(r,z)= k(r)^2 z + Q(r)|z|^{p-2}z,\quad
g_\infty(z)= k_\infty^2 z + Q_\infty|z|^{p-2}z
$$
and  
$$
      G(r,z) = \frac{k(r)^2}{2} z^2 + \frac{Q(r)}{p} |z|^p, \quad
      G_\infty(z) = \frac{k_\infty^2}{2} z^2 + \frac{Q_\infty}{p} |z|^p.
    $$
    By the regularity assumptions on $k,Q$ we have \eqref{ggen:reg}. Moreover, $k',Q'\leq
    0$ implies $g_r(r,z)z\leq 0$ for all $z\in\R$ and thus \eqref{ggen:sign}. We set
    \begin{align} \label{eq:NLH_interval_I}
      \begin{aligned}
      I&:= \{\alpha\in \R : G(0,\alpha)<\sup_\R G_\infty \} \\
      &= \Big\{\alpha\in \R : G(0,\alpha)< \big(\frac{1}{2}-\frac{1}{p}\big)k_\infty^2
      \Big(\frac{{k_\infty^2}_{~}}{(Q_\infty)^{-}}\Big)^{\frac{2}{p-2}}\Big\}.
      \end{aligned}  
    \end{align}
    Here, $(Q_\infty)^{-}=\max\{-Q_\infty,0\}$. For any given $\alpha\in I$ we can choose
    \begin{equation*} 
     0<\alpha_* = \alpha^* <\Big(\frac{|V_\infty|_{~}}{(Q_\infty)^{-}}\Big)^{\frac{1}{p-2}} 
     \;\text{s.t.}\; G_\infty(-\alpha_*)=G_\infty(\alpha^*)= G(0,\alpha).
    \end{equation*}
    For this choice of $\alpha_*,\alpha^*$  assumption 
\eqref{ggen:growth} holds. Finally,
\eqref{ggenasy1} follows from $2G(r,z)-zg(r,z)=O(|z|^p)$ as $z\to 0$ 
uniformly with respect to $r$ and \eqref{ggenasy2} holds because 
$g_r(r,z)z\geq (2k(r)k'(r)+Q'(r))z^2$ for $|z|\leq 1$. Hence, all 
assumptions of Theorem~\ref{thm:general} are satisfied and the 
existence of solutions of 	\eqref{eq:radial_nonautonomous} follows. 
Due to the  unique solvability of these initial value problems and the Theorem of  Ascoli-Arzel\`{a} they form a continuum in $C^2(\R)$ with respect to the 
$C^2-$convergence on compact sets. Finally we remark that \eqref{eq:NLH_interval_I} implies $I=\R$ whenever $Q_\infty\geq 0$.
  \end{proof}

\begin{remark}\label{comparison:evwe}
Theorem~\ref{thm:general} extends Theorem 4 in \cite{evwe}
in various directions. First of all, it provides  more qualitative information 
of the solutions  such as the $W^{1,\infty}$-bounds, the oscillating 
behaviour of the solutions and the lower bounds for their decay at
infinity. Additionally, we do not assume any global positivity assumption 
on $f$. Furthermore, our assumption \eqref{ggenasy1}
is not covered by the hypotheses in \cite{evwe}.
\end{remark}   

The following result can be proved similarly and we state it for completeness. 
\begin{cor} \label{cor:NLH2_nonauto}
   Let $N\geq 2$ and suppose that $\lambda,s\in C^1([0,+\infty),\R)$ are nondecreasing functions with
    limits $\lambda_\infty,s_\infty$, respectively, such that $s$ is positive and $\lambda_\infty<1/s_\infty$. Then there is a nonempty
     open interval $I$ containing 0 and a continuum $\mathcal{C}= \{ u_\alpha \in C^2(\R^N): \alpha\in I \}$
     in $C^2(\R^N)$ consisting of radially symmetric oscillating classical solutions of the equation 
     $$
       -\Delta u  + \lambda(|x|) u =  \frac{u}{s(|x|)+u^2} \quad \text{in }\R^N
     $$
     having the properties \eqref{eq:W1infty_bounds_generalcase},\eqref{eq:bounds_at_infinity_generalcase}
     from Theorem~\ref{thm:general}. In the case $\lambda_\infty\leq 0$ we have $I=\R$.
   \end{cor}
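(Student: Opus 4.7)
The plan is to apply Theorem~\ref{thm:general} to the nonlinearity $g(r,z)=-\lambda(r)z+z/(s(r)+z^2)$ along the lines of the proof of Corollary~\ref{cor:NLH_nonauto}. First I would record the companion quantities
\begin{equation*}
g_\infty(z) = -\lambda_\infty z + \frac{z}{s_\infty+z^2},\qquad G(r,z) = -\tfrac{\lambda(r)}{2} z^2 + \tfrac{1}{2}\ln(1+z^2/s(r)),
\end{equation*}
with $G_\infty$ defined analogously, so that \eqref{ggen:reg} and \eqref{ggen:lim} follow directly from the regularity of $\lambda,s$ and the existence of their limits.

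Next I would verify the structural conditions \eqref{ggen:sign}--\eqref{ggen:growth}. The derivative $g_r(r,z)z = -\lambda'(r)z^2 - s'(r)z^2/(s(r)+z^2)^2$ is nonpositive since $\lambda, s$ are nondecreasing, yielding \eqref{ggen:sign}. From $\lambda(r)\leq \lambda_\infty$ and $s(r)\leq s_\infty$ one gets
\begin{equation*}
g(r,z)z - g_\infty(z)z = (\lambda_\infty-\lambda(r))z^2 + z^2\Big[\tfrac{1}{s(r)+z^2}-\tfrac{1}{s_\infty+z^2}\Big]\geq 0,
\end{equation*}
while $g(r,z)z\leq z^2/s(0)$ furnishes $\Lambda$. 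The strict lower bound $g_\infty(z)z\geq \lambda z^2$ with $\lambda>0$ holds on $[-\alpha_*,\alpha^*]$ as long as $\alpha^{*2},\alpha_*^2<1/\lambda_\infty - s_\infty$ (which is strictly positive thanks to $\lambda_\infty<1/s_\infty$) when $\lambda_\infty>0$; no such restriction is needed if $\lambda_\infty\leq 0$.

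I would then set $I:=\{\alpha\in\R : G(0,\alpha)<\sup_\R G_\infty\}$, which is open and contains $0$ because $G(0,0)=0<\sup G_\infty$ (the maximum of $G_\infty$ is strictly positive since $G_\infty''(0)=1/s_\infty-\lambda_\infty>0$). For each $\alpha\in I$, the oddness of $g_\infty$ makes $G_\infty$ even, so choosing $\alpha_*=\alpha^*\in(0,z_c)$ with $G_\infty(\alpha^*)=G(0,\alpha)$ is possible, where $z_c$ denotes the smallest positive critical point of $G_\infty$ (with $z_c=+\infty$ if $\lambda_\infty\leq 0$). This settles \eqref{eq:choice_alpha}. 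The asymptotic conditions follow from the Taylor expansion
\begin{equation*}
2G(r,z)-zg(r,z) = \ln(1+z^2/s(r)) - \tfrac{z^2}{s(r)+z^2} = O(z^4)\quad\text{as }z\to 0,
\end{equation*}
uniformly in $r$, giving \eqref{ggenasy1}, and from the estimate $g_r(r,z)z\geq -[\lambda'(r)+s'(r)/s(0)^2]z^2$ where the bracket lies in $L^1([0,\infty))$ because $\lambda,s$ are monotone with finite limits, giving \eqref{ggenasy2}. Invoking Theorem~\ref{thm:general} together with the Ascoli--Arzel\`a theorem and the unique solvability of the initial value problems then produces the continuum $\mathcal{C}\subset C^2(\R^N)$, and $I=\R$ in the case $\lambda_\infty\leq 0$ because there $G_\infty$ is unbounded above.

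The delicate step, as in Corollary~\ref{cor:NLH_nonauto}, is arranging \eqref{ggen:growth} \emph{and} \eqref{eq:choice_alpha} simultaneously: the hypothesis $\lambda_\infty<1/s_\infty$ is precisely what guarantees strict convexity of $G_\infty$ at the origin together with a strictly positive maximum at $z_c$, which in turn enables the equation $G_\infty(\alpha^*)=G(0,\alpha)$ to be solved by some $\alpha^*\in(0,z_c)$ for every $\alpha$ in a neighbourhood of $0$. Without this compatibility between the amplitude bound and the growth bound the whole scheme collapses, so verifying it carefully is the main technical point of the proof.
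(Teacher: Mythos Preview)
Your proposal is correct and follows precisely the approach the paper indicates (``proved similarly'' to Corollary~\ref{cor:NLH_nonauto}): you verify \eqref{ggen:reg}--\eqref{ggen:growth}, \eqref{ggenasy1}--\eqref{ggenasy2}, and \eqref{eq:choice_alpha} for the saturable nonlinearity and then invoke Theorem~\ref{thm:general}. One small slip: the upper bound $g(r,z)z\leq z^2/s(0)$ is only valid when $\lambda(0)\geq 0$; in general take $\Lambda = 1/s(0) + (\lambda(0))^{-}$, which does not affect the rest of the argument.
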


\section{Nonradial solutions} \label{sec:nonradial}

In this section we study equation \eqref{eq:NLH_dual} proving
Theorem~\ref{Thm:variant_evwe_thm1.1} and Theorem~\ref{Thm:variant_evwe_thm1.2}.
We will follow the argument introduced in \cite{evwe_dual} adapting their methods to our context.
First note that, up to rescaling, we may assume $k=1$ in the following. Let us introduce some notations in order to facilitate the reading. 
Let  $\Psi$ be the real part of the fundamental solution
of the Helmholtz equation $-\Delta-1$ on $\R^{N}$
(see for example (11) in \cite{evwe_dual} ).
Performing  the transformation 
$v=|Q|^{1/p'}|u|^{p-2}u$ for $\frac{1}{p}+\frac{1}{p'}=1$
our problem amounts to solving  
\begin{equation} \label{eq:dual}
|v|^{p'-2}v = - |Q|^{1/p}[\Psi\ast (|Q|^{1/p}v)]\quad\text{in }\R^N.
\end{equation}
Notice that the right hand side comes with a negative sign in contrast to \cite{evwe_dual}. This is because
we assume $Q$ to be negative so that $Q=-|Q|$.  
Let us introduce  the linear operators 
$\bold{R},\, \bold{K}_p:L^{p'}(\R^N)\to L^p(\R^N)$
defined by
\begin{equation}\label{def:RK}
\bold{R}(v)= \Psi\ast v,\qquad 
\bold{K}_p(v) = |Q|^{1/p} \bold{R}(|Q|^{1/p}v).
\end{equation}
Both $\bold{R}$ and  $\bold{K}_p$ are continuous and we have
for all $f,g\in L^{p'}(\R^N)$
\begin{equation} \label{eq:R}
\int_{\R^N} f \bold{R}(g)
= \int_{\R^N} f (\Psi\ast g)
= \lim_{\eps\to 0} \int_{\R^N} \frac{(|\xi|^2-1)\hat f(\xi)\hat g(\xi)}{(|\xi|^2-1)^2+\eps^2} \,d\xi,
\end{equation}
where $\hat f,\hat g$ are the Fourier transforms of $f$ and $g$, 
respectively. In view of the variational structure of \eqref{eq:dual} we 
define the functionals $J,\,\bar J:L^{p'}(\R^N)\to \R$  via  the formulas
\begin{align} \label{eq:defJ}
\begin{aligned}
J(v) &:= \frac{1}{p'} \int_{\R^N} |v|^{p'} - \frac{1}{2} \int_{\R^N} v 
\bold{K}_p(v) \\
\bar J(v) 
&
:= \frac{1}{p'} \int_{\R^N} |v|^{p'} + \frac{1}{2} 
\int_{\R^N} v \bold{K}_p(v)
\end{aligned} 
\end{align}
so that the solutions  of \eqref{eq:dual} are precisely the critical points of $\bar J$, see (49) in
\cite{evwe_dual}. Notice that the functional $J$ is used when $Q$ is positive. Our main observation is that
not only $J$ but also $\bar J$ has the mountain pass geometry. This follows from the following Lemmas which
are the counterparts of Lemma~4.2 and Lemma~5.2 in \cite{evwe_dual}. In the following we will denote with
$\|\cdot\|_{q}$ the standard norm in the Lebesgue space $L^{q}(\R^{N})$.
  
  \medskip

  \begin{lem}\label{Lem:variant_evwe_lemma4.2ii}
    Under the assumptions of Theorem~\ref{Thm:variant_evwe_thm1.1} there is a function $v_0\in L^{p'}(\R^N)$
    such that $\|v_0\|_{p'}>1,\bar J(v_0)<0$.
  \end{lem}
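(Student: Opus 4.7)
The strategy is to find a test direction $w\in L^{p'}(\R^N)$ along which the quadratic part of $\bar J$ is strictly negative, and then set $v_0:=tw$ with $t$ large. Since $\bold{K}_p$ is symmetric,
$$
\bar J(tw) = \frac{t^{p'}}{p'}\|w\|_{p'}^{p'} + \frac{t^2}{2}\int_{\R^N} w\,\bold{K}_p(w),
$$
and the hypothesis $p>2$ gives $p'<2$. Hence, once $\int w\,\bold{K}_p(w)<0$, the quadratic term dominates and $\bar J(tw)\to -\infty$ as $t\to\infty$; clearly $\|tw\|_{p'}>1$ for $t$ large. The problem is therefore reduced to producing a single $w\in L^{p'}(\R^N)$ with $\int w\,\bold{K}_p(w)<0$.

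I would construct such a $w$ on the spectral side through the Fourier identity \eqref{eq:R}. Writing $\phi:=|Q|^{1/p}w$, the definition \eqref{def:RK} gives $\int w\,\bold{K}_p(w) = \int \phi\,\bold{R}(\phi)$. Pick a nontrivial Schwartz function $\phi$ which is real-valued and even, with $\hat\phi\geq 0$ supported in the compact ball $\{|\xi|\leq 1-\delta\}$ for some $\delta\in(0,1)$. On the support of $\hat\phi$ the kernel $(|\xi|^2-1)/[(|\xi|^2-1)^2+\eps^2]$ stays uniformly bounded and converges as $\eps\to 0^+$ to $1/(|\xi|^2-1)$, which is strictly negative and bounded there. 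Dominated convergence in \eqref{eq:R} then yields
$$
\int \phi\,\bold{R}(\phi) = \int_{|\xi|\leq 1-\delta} \frac{|\hat\phi(\xi)|^2}{|\xi|^2-1}\,d\xi <0.
$$

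The final step is to realize $\phi$ as $|Q|^{1/p}w$ for some $w\in L^{p'}(\R^N)$. Since $Q\in L^\infty$ and vanishes only on a null set, I would set $w_n:= |Q|^{-1/p}\phi\,\mathbf{1}_{\{|Q|\geq 1/n\}}$; these are bounded pointwise by $n^{1/p}|\phi|$ and therefore lie in every $L^q(\R^N)$, in particular in $L^{p'}$. Moreover $|Q|^{1/p}w_n = \phi\,\mathbf{1}_{\{|Q|\geq 1/n\}}\to \phi$ in $L^{p'}(\R^N)$ as $n\to\infty$, since the null set $\{Q=0\}$ is negligible. The Kenig--Ruiz--Sogge type continuity of $\bold{R}:L^{p'}(\R^N)\to L^p(\R^N)$ (already used to make $\bold{K}_p$ well-defined) then gives
$$
\int w_n\,\bold{K}_p(w_n) = \int \bigl(|Q|^{1/p}w_n\bigr)\,\bold{R}\bigl(|Q|^{1/p}w_n\bigr) \longrightarrow \int \phi\,\bold{R}(\phi)<0.
$$
For $n$ large enough I would take $w:=w_n$ and finally $v_0 := tw$ with $t$ so large that $\|v_0\|_{p'}>1$ and the quadratic term forces $\bar J(v_0)<0$.

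The main obstacle is the spectral step: checking that the $\eps\to 0^+$ limit in \eqref{eq:R} delivers a strictly negative quantity on an accessible class of test functions. This is precisely why $\hat\phi$ must be supported strictly inside the unit ball, keeping away from the critical sphere $|\xi|=1$ where the symbol of $\Psi$ is singular; once this separation is in place, the rest of the argument is a routine truncation--density manipulation on the multiplier $|Q|^{1/p}$ combined with the scaling observation $p'<2$.
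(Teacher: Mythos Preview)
Your proof is correct and follows essentially the same route as the paper: choose a Schwartz function with Fourier support compactly contained in the open unit ball so that the Fourier integral in \eqref{eq:R} is strictly negative, then approximate $|Q|^{-1/p}\phi$ by truncations on the sets $\{|Q|\geq 1/n\}$ and use the $L^{p'}\to L^p$ continuity of $\bold{R}$ to pass to the limit in the quadratic form; finally scale by a large $t$, using $p'<2$. The paper's argument is identical up to notation (it writes $y$, $z_\delta$ where you write $\phi$, $w_n$), and your additional cosmetic requirements on $\phi$ (real, even, $\hat\phi\geq 0$) are harmless.
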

  \begin{proof}
As in Lemma 4.2(ii) \cite{evwe_dual} it suffices to prove 
\begin{equation}
\int_{\R^N} z \bold{K}_p z< 0
\end{equation} 
for some $z\in L^{p'}(\R^N)$ because then one may take $v_0:= t z$ for sufficiently large $|t|$. To this end  let $y\in \mathcal{S}(\R^N)$ be a nontrivial Schwartz function satisfying $ \supp(\hat y) \subset B_1(0)$. For $\delta>0$ we set
$$
  z_\delta:= y|Q|^{-1/p} 1_{\{|Q|>\delta\}},\qquad 
  \mu := \int_{\supp(\hat y)} \frac{|\hat y(\xi)|^2}{|\xi|^2-1}\,d\xi < 0,
$$
where $1_{\{|Q|>\delta\}}$ is the indicator function of the set 
$\{x\in \R^{N}\,:\,|Q(x)|>\delta\}$.
    Then we have $z_\delta\in L^{p'}(\R^N)$ and thus $\bold{K}_pz_\delta \in L^p(\R^N)$. Hence, by
    definition of $\bold{K}_p$, the function $y_\delta:= |Q|^{1/p}z_\delta = y\cdot 1_{\{|Q|>\delta\}}$ satisfies
    \begin{align*}
      \int_{\R^N} z_\delta (\bold{K}_p z_\delta)
      =  \int_{\R^N} z_\delta|Q|^{1/p} \bold{R}(|Q|^{1/p}z_\delta)
      =  \int_{\R^N} y_\delta \bold{R}(y_\delta). 
    \end{align*}
Since we have $|Q|>0$ almost everywhere, we get 
$y_\delta\to y$ in $L^{p'}(\R^N)$ as $\delta\to 0^{+}$. Thus the continuity of
    $\bold{R}$ implies that we can choose $\delta>0$ so small that the following holds:
    \begin{align*}
      \int_{\R^N} z_\delta (\bold{K}_p z_\delta)
      <  \int_{\R^N} y\bold{R}(y) + \frac{|\mu|}{2}.  
    \end{align*} 
    From this and \eqref{eq:R} we infer
    \begin{align*} 
      \int_{\R^N} z_\delta (\bold{K}_p z_\delta)
      <  \lim_{\eps\to 0}  \int_{\R^N} \frac{(|\xi|^2-1)|\hat y(\xi)|^2}{(|\xi|^2-1)^2+\eps^2}\,d\xi  + 
      \frac{|\mu|}{2}      
      =  \mu + \frac{|\mu|}{2} < 0
    \end{align*}
    which is all we had to show.
  \end{proof}
   
   \medskip
    
  \begin{lem}\label{Lem:variant_evwe_lemma5.1}
    Let the assumptions of Theorem~\ref{Thm:variant_evwe_thm1.2} hold. Then for every $m\in\N$ there is an
    $m$-dimensional subspace $\mathcal{W}\subset L^{p'}(\R^N)$ with the following properties:
    \begin{itemize}
      \item[(i)] $\dys \int_{\R^N} v\bold{K}_pv < 0$ for all $v\in \mathcal{W}\sm\{0\}$.
      \item[(ii)] There exists $R = R(\mathcal{W}) > 0$ such that $\bar J(v) \leq  0$ for every $v\in \mathcal{W}$ with
      $\|v\|_{p'}\geq R$.
    \end{itemize}
  \end{lem}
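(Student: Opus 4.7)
The plan is to generalise Lemma~\ref{Lem:variant_evwe_lemma4.2ii} by producing an $m$-dimensional subspace on which the quadratic form $v\mapsto\int_{\R^N}v\bold{K}_pv$ is strictly negative definite. First I would choose $m$ real-valued even Schwartz functions $y_1,\dots,y_m\in\mathcal{S}(\R^N)$ whose Fourier supports $\supp(\hat y_i)$ are pairwise disjoint and contained in $B_1(0)$; evenness makes each $\hat y_i$ real-valued, and $|\xi|^2-1<0$ on $B_1(0)$. Then \eqref{eq:R} gives $\int_{\R^N}y_i\bold{R}(y_j)=0$ for $i\ne j$ (by disjoint supports) and
\[
\mu_i:=\int_{\R^N}y_i\bold{R}(y_i) = \lim_{\eps\to 0}\int_{\R^N}\frac{(|\xi|^2-1)\hat y_i(\xi)^2}{(|\xi|^2-1)^2+\eps^2}\,d\xi \;<\;0,
\]
so for every $c=(c_1,\dots,c_m)\in\R^m$ one obtains $\int_{\R^N}\bigl(\sum_i c_iy_i\bigr)\bold{R}\bigl(\sum_i c_iy_i\bigr)=\sum_i c_i^2\mu_i\leq -\kappa|c|^2$ with $\kappa:=\min_i|\mu_i|>0$.

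Next I would transfer this structure to $\bold{K}_p$ via the truncation used in Lemma~\ref{Lem:variant_evwe_lemma4.2ii}: set $z_{i,\delta}:=y_i|Q|^{-1/p}\mathbf{1}_{\{|Q|>\delta\}}\in L^{p'}(\R^N)$, so that $|Q|^{1/p}z_{i,\delta}=y_i\mathbf{1}_{\{|Q|>\delta\}}\to y_i$ in $L^{p'}(\R^N)$ as $\delta\to 0^+$ (this uses $|Q|>0$ a.e., which follows from the negativity hypothesis on $Q$ in Theorem~\ref{Thm:variant_evwe_thm1.2}). The continuity of $\bold{R}:L^{p'}(\R^N)\to L^p(\R^N)$ then yields
\[
\int_{\R^N}\Big(\sum_{i=1}^m c_iz_{i,\delta}\Big)\bold{K}_p\Big(\sum_{i=1}^m c_iz_{i,\delta}\Big) \;\longrightarrow\; \sum_{i=1}^m c_i^2\mu_i \quad\text{as }\delta\to 0^+,
\]
uniformly for $c$ on the unit sphere of $\R^m$. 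Consequently, for all sufficiently small $\delta>0$ this quadratic form is negative definite on $\mathcal{W}:=\spa\{z_{1,\delta},\dots,z_{m,\delta}\}$. The same uniform convergence, applied to the finite-rank map $c\mapsto |Q|^{1/p}\sum_i c_iz_{i,\delta}$ (whose limit $c\mapsto\sum_i c_iy_i$ is injective because of the disjoint Fourier supports), forces $\dim\mathcal{W}=m$ for small $\delta$. This establishes property~(i).

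For property~(ii), the continuous negative-definite quadratic form on the finite-dimensional space $\mathcal{W}$ is comparable to $-\|\cdot\|_{p'}^2$, so by compactness of the $L^{p'}$-unit sphere of $\mathcal{W}$ and homogeneity there exists $c_0>0$ with $\int_{\R^N}v\bold{K}_pv\leq -c_0\|v\|_{p'}^2$ for every $v\in\mathcal{W}$. Since $p>\frac{2(N+1)}{N-1}>2$ gives $p'<2$, definition \eqref{eq:defJ} implies
\[
\bar J(v) \;\leq\; \frac{1}{p'}\|v\|_{p'}^{p'} - \frac{c_0}{2}\|v\|_{p'}^2,
\]
and the right-hand side is nonpositive for $\|v\|_{p'}\geq R$ once $R=R(\mathcal{W})$ is chosen large enough. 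The delicate step in the argument is transferring the negative-definiteness from $\bold{R}$ on $\spa\{y_i\}$ to $\bold{K}_p$ on the approximating space $\mathcal{W}$ while controlling all directions simultaneously; compactness of the unit sphere in $\R^m$ combined with the uniform approximation $|Q|^{1/p}z_{i,\delta}\to y_i$ in $L^{p'}$ resolves this cleanly.
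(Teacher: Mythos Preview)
Your proof is correct and follows essentially the same route as the paper: choose Schwartz functions with pairwise disjoint Fourier supports inside $B_1(0)$, transfer negativity to $\bold{K}_p$ via the truncations $z_{i,\delta}=y_i|Q|^{-1/p}\mathbf{1}_{\{|Q|>\delta\}}$, and deduce (ii) from the resulting negative-definite quadratic form together with $p'<2$. You supply more detail than the paper (which simply says ``similar calculations as above show (i) and (ii)''), notably the uniformity argument over the sphere in $\R^m$ and the explicit comparison $\bar J(v)\leq \tfrac{1}{p'}\|v\|_{p'}^{p'}-\tfrac{c_0}{2}\|v\|_{p'}^2$; your additional evenness hypothesis on the $y_i$ is a harmless convenience, not a genuine extra ingredient.
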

  \begin{proof}
    Let  $y^1,\ldots,y^m\in\mathcal{S}(\R^N)$ be nontrivial Schwartz functions such that 
    \begin{equation} \label{eq:lem_symmetric_MP}
      \bigcup_{j=1}^m \supp(\hat y^j) \subset B_1(0), \qquad
      \supp(\hat y^j)\cap \supp(\hat y^i) =\emptyset \quad (i\neq j). 
    \end{equation}
    For sufficiently small $\delta>0$ we then define 
    $$
      \mathcal{W}:=\spa\{z_\delta^1,\ldots,z_\delta^m\}\qquad\text{where }
      z_\delta^j :=  y^j|Q|^{-1/p} 1_{\{|Q|>\delta\}}.
    $$
    Then \eqref{eq:lem_symmetric_MP} implies that $\mathcal{W}$ is $m$-dimensional and similar
    calculations as above show (i) and (ii).
  \end{proof}
  
  With the aid of the above Lemmas the proofs of our theorems are essentially the same as in \cite{evwe}. We
  indicate the main steps for the convenience of the reader.
  
  \medskip
  
\textit{Proof of Theorem \ref{Thm:variant_evwe_thm1.1}:}\;   
Under the given assumptions $\bar J$ has the Mountain Pass 
geometry.  
Indeed, as in the parts (i),(iii) of Lemma~4.2 in \cite{evwe_dual} one 
proves that $0$ is a strict local minimum and the boundedness of 
Palais-Smale sequences of $\bar J$. 
In  Lemma~\ref{Lem:variant_evwe_lemma4.2ii} we proved that there is 
a $v_0\in L^{p'}(\R^N)$ such that
$\|v_0\|_{p'}>1,\bar J(v_0)<0$. Hence, as in Lemma~6.1 
\cite{evwe_dual} the Deformation Lemma implies the
existence of a bounded Palais-Smale sequence $(v_{m})$ 
for $\bar J$ at its Mountain-Pass level $\bar c>0$. Similar to
the proof of Theorem~6.2 in \cite{evwe_dual} one has 
  \begin{align*}
    \lim_{m\to\infty} \int_{\R^N}|Q|^{1/p}\bold{R}(|Q|^{1/p}v_m)
    &= \frac{2p'}{2-p'}\lim_{m\to\infty} \Big[ - \bar J(v_m)+\frac{1}{p'}\bar J'(v_m)[v_m]\Big] \\
    &= - \frac{2p'}{2-p'} \bar c 
    < 0.
  \end{align*}
Then, Theorem~3.1 in \cite{evwe_dual} implies that there are $R,\zeta>0$ and points $x_m\in\R^N$ and a subsequence, still
denoted with $(v_m)$,  such that 
$$
\int_{B_R(x_m)} |v_m|^{p'} \geq \zeta>0.
$$
From this point on the reasoning is the same as in \cite{evwe_dual} and 
we obtain that $(v_m)$ converges weakly to a nontrivial solution $v$ 
of \eqref{eq:NLH_dual} and the solution $u$ of the original equation
 may be found via $u= \bold{R}(|Q|^{1/p}v)\in L^p(\R^N)$. In 
 particular $u$ satisfies \eqref{eq:NLH_dual} and
$$
\lim_{|x|\to\infty} \int_{|x-y|\leq 1} |u(y)|^p \,dy = 0
$$
so that replacing 2 by $p$ in the proof of Theorem C.3.1 in 
\cite{Simon_Schr} one proves $u(x)\to 0$ as
$|x|\to\infty$, namely that $u$ is localized. 
This implies 
$$
\Delta u + (k^2+o(1))u = 0,\quad \text{as } |x|\to\infty
$$ 
so that the  PDE version of  Sturm's comparison principle (for instance 
Theorem 5.1 in \cite{Swa_comparison_and_oscillation}) 
and the Strong Maximum Principle show that $u$ is oscillating. 
\qed  
    
\textit{Proof of Theorem \ref{Thm:variant_evwe_thm1.2}:}\; 
Lemma \ref{Lem:variant_evwe_lemma5.1} yields all
the required geometrical features of the symmetric Mountain Pass
Theorem (see Theorem~6.5 in \cite{struwe}). 
Moreover,  Lemma 5.2   in \cite{evwe_dual} implies that
the Palais-Smale condition holds for $\bar J$, giving the
existence of pairs of non-trivial localized solutions.
The oscillation property follows again from Theorem 5.1 in
\cite{Swa_comparison_and_oscillation}.

\qed
  
  \medskip 

\section{On the approximation by bounded domains}\label{variational}

In this section we briefly address the question whether localized 
solutions of \eqref{eq:ge} can be
approximated by solutions of the corresponding homogeneous Dirichlet 
problem on a large bounded domain. We are
going to show that this method does not work in general. More 
precisely, we will prove that the positive
minimizers of the  Euler functionals associated with the problem 
on bounded domains diverge in $H^1(\R^N)$ 
as the domains approach $\R^N$\!.
Even though the divergence will only be proved for the sequence of minimizers we believe that the analogous
phenomenon occurs for broader classes of finite energy solutions, e.g. constrained minimizers, or solutions with a given upper bound
on their nodal domains or on their Morse index.  Throughout this section we will assume that the nonlinearity
$g$ satisfies the hypotheses \eqref{g:reg},\eqref{g:odd},\eqref{g:der} as well as
\eqref{g:sign} with $\alpha_{0}\in (0,+\infty)$, in order to avoid some sub-critical growth conditions
(see Remark \ref{app:rem2}).

Let $\Omega\subset\R^N$ be a bounded domain, and consider the variational problem
\begin{equation}\label{defc}
  c_\Omega:= \inf_{H_0^1(\Omega)} I_{\Omega}
  \quad\text{where }  I_{\Omega}(u) = \frac{1}{2} \int_{\Omega}  |\nabla u|^2  - \int_\Omega G(u)
\end{equation}
where $G(z)$ denotes the primitive of $g$ such that $G(0)=0$. Notice that
\eqref{g:reg},\eqref{g:der},\eqref{g:sign} implies 
$G(z)\leq C|z|^2$ for some $C>0$ and for all
$z\in\R$, so that $I_\Omega:H_0^1(\Omega)\mapsto \R\cup\{+\infty\}$ is well-defined.
Bounded critical points of $I_\Omega$ are classical solutions of the boundary value problem
 \begin{equation}  \label{eq:bounded}
\begin{cases}  
-\Delta w  = g(w) &\text{in }\Omega, \\ 
 w\in H_0^1(\Omega). &
\end{cases}  
\end{equation}
In the following proposition we show that $I_\Omega$ admits a positive minimizer provided
$g'(0)>\lambda_1(\Omega)$ holds. More precisely, we have the following result.

\begin{prop} \label{app:Prop}
Let $\Omega\subset\R^N$ be a bounded domain and in addition to 
\eqref{g:reg},\eqref{g:odd},\eqref{g:der},\eqref{g:sign} assume $g'(0)>\lambda_1(\Omega)$. 
Then, there exists a global minimizer $u_\Omega$ of $I_\Omega$ in $H^{1}_{0}(\Omega)$ which is a solution of \eqref{eq:bounded}
  satisfying $0 < u_\Omega < \alpha_0$ in $\Omega$.
\end{prop}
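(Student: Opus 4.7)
The approach is the direct method of the calculus of variations, supplemented by a truncation argument and the strong maximum principle. First I would observe that, since $\alpha_0<+\infty$ and $G'=g$ is positive on $(0,\alpha_0)$ and negative on $(\alpha_0,+\infty)$, the even function $G$ attains its global supremum $M:=G(\alpha_0)>0$ on $\R$. Consequently $\int_\Omega G(u)\leq M|\Omega|$ for every measurable $u$, and therefore
\[
I_\Omega(u) \;\geq\; \tfrac{1}{2}\|\nabla u\|_{L^2(\Omega)}^2 - M|\Omega|,
\]
so $I_\Omega$ is bounded from below on $H_0^1(\Omega)$ and every minimizing sequence $(u_n)$ is bounded in $H_0^1(\Omega)$.

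Next, I would show that $c_\Omega<0$ by testing $I_\Omega$ on multiples of the first Dirichlet eigenfunction $\phi_1>0$ of $-\Delta$ on $\Omega$, normalized so that $\|\phi_1\|_{L^2(\Omega)}=1$. Using $g\in C^{1,\sigma}$ and $g(0)=0$, one has $G(z)=\tfrac{1}{2}g'(0)z^2+O(|z|^{2+\sigma})$ as $z\to 0$, and hence
\[
I_\Omega(t\phi_1) = \tfrac{t^2}{2}\bigl(\lambda_1(\Omega)-g'(0)\bigr) + O(t^{2+\sigma}) \qquad (t\to 0^+),
\]
which is strictly negative for all small enough $t>0$ thanks to the hypothesis $g'(0)>\lambda_1(\Omega)$.

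With these two ingredients, a minimizer is produced in a standard way. Up to a subsequence a minimizing sequence converges weakly in $H_0^1(\Omega)$, strongly in $L^2(\Omega)$ and a.e.\ (by Rellich--Kondrachov) to some $u_\Omega$; the squared gradient norm is weakly lower semicontinuous, while the uniform bound $|G|\leq M$ together with dominated convergence yields $\int_\Omega G(u_n)\to\int_\Omega G(u_\Omega)$. Hence $I_\Omega(u_\Omega)\leq\liminf I_\Omega(u_n)=c_\Omega$, so $u_\Omega$ is a minimizer, a critical point of $I_\Omega$, and by elliptic regularity (using $g\in C^{1,\sigma}$) a classical solution of \eqref{eq:bounded}. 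Since $G$ is even, $|u_\Omega|$ is again a minimizer, so I may assume $u_\Omega\geq 0$.

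For the pointwise bounds I would first truncate: $w:=\min\{u_\Omega,\alpha_0\}$ belongs to $H_0^1(\Omega)$ and satisfies $|\nabla w|\leq|\nabla u_\Omega|$ a.e., while $G(w)\geq G(u_\Omega)$ with strict inequality on $\{u_\Omega>\alpha_0\}$ because $G$ is strictly decreasing on $(\alpha_0,+\infty)$. If that set had positive measure, then $I_\Omega(w)<I_\Omega(u_\Omega)=c_\Omega$, contradicting minimality; therefore $0\leq u_\Omega\leq\alpha_0$ a.e. On this range $g(u_\Omega)\geq 0$, so $-\Delta u_\Omega\geq 0$, and since $u_\Omega\not\equiv 0$ (as $c_\Omega<0$) the strong maximum principle for nonnegative superharmonic functions yields $u_\Omega>0$ in $\Omega$. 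For the strict upper bound one writes, via $g(\alpha_0)=0$ and the mean value theorem, the linear equation $-\Delta(\alpha_0-u_\Omega)+c(x)(\alpha_0-u_\Omega)=0$ with $c\in L^\infty(\Omega)$; were $\alpha_0-u_\Omega$ to vanish at an interior point it would be identically zero by the strong maximum principle, contradicting its positive boundary value $\alpha_0$. The only step requiring real care is the truncation argument ensuring $u_\Omega\leq\alpha_0$; everything else is routine.
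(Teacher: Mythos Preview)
Your approach is essentially the paper's: bound $G$ above by $G(\alpha_0)$ to get coercivity, test with $t\phi_1$ to see $c_\Omega<0$, extract a minimizer by the direct method, then truncate and apply the strong maximum principle. The truncation and the two strict inequalities are handled correctly.

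There is, however, one slip. You invoke dominated convergence via ``the uniform bound $|G|\leq M$'', but only $G\leq M$ holds; $G$ is \emph{not} bounded below in general (for the model nonlinearities $g_1,g_2$ one has $G(z)\to-\infty$ as $|z|\to\infty$). So dominated convergence is not available for $\int_\Omega G(u_n)$ as written. Two easy repairs: either replace each $u_n$ by $\tilde u_n:=\min\{|u_n|,\alpha_0\}$ before passing to the limit---this does not increase $I_\Omega$ (same computation as your truncation step) and gives a minimizing sequence with $\|\tilde u_n\|_\infty\le\alpha_0$, on which $G$ is bounded and dominated convergence applies; or keep $(u_n)$ and use Fatou's lemma on the nonnegative functions $M-G(u_n)$ to obtain $\limsup_n\int_\Omega G(u_n)\le\int_\Omega G(u_\Omega)$, which together with weak lower semicontinuity of the Dirichlet energy yields $I_\Omega(u_\Omega)\le\liminf_n I_\Omega(u_n)$. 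Either way the conclusion stands.
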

\begin{proof}
Hypotheses \eqref{g:odd} and \eqref{g:sign} imply that $G(z)\leq G(\alpha_0)$ holds for every $z\in \R$.
Hence, for all $u\in H_0^1(\Omega)$ we have
$$
I_\Omega(u) \geq \frac{1}{2} \int_{\Omega}  |\nabla u|^2 - \int_\Omega G(\alpha_0)\dx = \frac{1}{2}
\int_{\Omega}  |\nabla u|^2  - |\Omega| G(\alpha_0), 
$$ 
which shows that $I_\Omega$ is coercive and bounded from below.
Moreover, if $\phi_1$ denotes the eigenfunction associated to $\lambda_1(\Omega)$, then  
\begin{align*}
\lim_{t\to 0} \frac{I_{\Omega}(t\phi_1)}{t^2}
= \frac{1}{2}\int_\Omega |\nabla \phi_1|^2 - G''(0)\phi_1^2
= \frac{1}{2}\Big(\lambda_1(\Omega)-g'(0)\Big) \int_\Omega \phi_1^2 < 0,
\end{align*}
so that $c_{\Omega}<0=I_{\Omega}(0)$. Additionally, $I_\Omega$ is weakly sequentially lower semicontinuous so
that there exists a minimizer $u_\Omega$, which must be nontrivial because of $c_\Omega <0$. 
We may assume $0\leq u_\Omega\leq \alpha_0$ because  
$\min\{|u_\Omega|,\alpha_0\}\in H_0^1(\Omega)$ is another minimizer of $I_\Omega$. From the strong
maximum principle we deduce that $u_\Omega$ satisfies $0<u_\Omega<\alpha_0$ in $\Omega$ as it is nontrivial.  
\end{proof}

\begin{remarks} \label{app:rem}
\begin{itemize}
\item[(a)] If  $z\mapsto g(z)/z$ is decreasing, then the condition $g'(0)>\lambda_1(\Omega)$ is even necessary for the existence of a positive solution $u\in
H_0^1(\Omega)$. Indeed, testing \eqref{eq:bounded} with $u$ gives $$
\lambda_1(\Omega)\, \int_\Omega  u^2
\leq \int_\Omega  |\nabla u|^2 
= \int_\Omega g(u)u
< g'(0) \,\int_\Omega u^2.     
$$ 
In particular, note that our model nonlinearities $g_1,g_2$ 
given in \eqref{model:g1} and \eqref{model:g23} satisfy this monotonicity property. 
\item[(b)] If $\Omega$ is smooth then Theorem~1 in \cite{bros} shows that the
the positive solution of \eqref{eq:bounded} is unique provided  $z\mapsto g(z)/z$ is decreasing. 
\end{itemize}
\end{remarks}
\begin{remark}\label{app:rem2} 
In this section we do not consider the case $\alpha_0=+\infty$ 
in \eqref{g:sign} because, without imposing additional growth
conditions,  the functional  $I_\Omega$ may  not be well-defined in 
this case and, even if it were, it need not be bounded from below.
\end{remark}
  
Next we study the convergence of the minimizers obtained in 
Proposition~\ref{app:Prop}. To this end, we consider a sequence $(\Omega_n)$ 
of bounded domains satisfying
$\Omega_n\subset\Omega_{n+1}\subset\R^N$ and $
\bigcup_{n\in\N} \Omega_n = \R^N$. Since every compact subset
of $\R^N$ is covered by finitely many of those bounded domains we 
observe that $\lambda_1(\Omega_n)\to 0$ as $n\to\infty$ so that, by the above proposition,
the existence of positive minimizers is guaranteed for
large $n$ provided that \eqref{g:der} holds. We show that the minimizers 
converge to the constant solution $\alpha_0$ and therefore do not 
give any new finite energy solution.

 \begin{thm}\label{teo:app}
Assume \eqref{g:reg},\eqref{g:odd},\eqref{g:der},\eqref{g:sign}
and  let $(\Omega_n)$ be a sequence of bounded domains such that 
$\Omega_{n}\subseteq \Omega_{n+1}$ and $\cup_{n}\Omega_{n}
=\R^{N}$. Then, for all sufficiently large $n$, there exists a nontrivial 
minimizer   $u_n$ of $I_{\Omega_n}$ on $H^1_0(\Omega_n)$ having 
the following properties:
\begin{itemize}
\item[(a)] $0<u_n<\alpha_0$ in $\Omega_n$, 
\item[(b)] $I_{\Omega_n}(u_{n})\to -\infty$,
\item[(c)] $u_n\to \alpha_0$ in $C^\infty_{loc}(\R^N)$ and $\|u_n\|
_{L^q(\Omega_n)}\to \infty$
for all $q\in [1,\infty)$.
\end{itemize}
\end{thm}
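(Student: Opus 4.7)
Since $\Omega_n\uparrow\R^N$, monotonicity of the Dirichlet eigenvalue under domain inclusion gives $\lambda_1(\Omega_n)\to 0$, so $g'(0)>\lambda_1(\Omega_n)$ for $n$ large and Proposition~\ref{app:Prop} produces the minimizer $u_n$ with $0<u_n<\alpha_0$ in $\Omega_n$, giving (a). For (b), choose $R_n\to\infty$ with $B_{R_n+1}\subset\Omega_n$ and a cutoff $\eta_n\in C^\infty_c(\Omega_n)$ equal to $1$ on $B_{R_n}$, vanishing outside $B_{R_n+1}$, with $|\nabla\eta_n|$ uniformly bounded; the trial function $\varphi_n:=\alpha_0\eta_n\in H^1_0(\Omega_n)$ yields
\[
I_{\Omega_n}(\varphi_n) \;\le\; -G(\alpha_0)\,|B_{R_n}|+C\,R_n^{N-1},
\]
which tends to $-\infty$ because $G(\alpha_0)>0$ (as $g>0$ on $(0,\alpha_0)$). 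Minimality then gives $I_{\Omega_n}(u_n)\to-\infty$.

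\textbf{Paragraph 2 (set-up for (c)).} Since $0\le u_n\le\alpha_0$ and $g\in C^{1,\sigma}$, interior Schauder estimates applied to $-\Delta u_n=g(u_n)$ yield a uniform bound for $u_n$ in $C^{2,\sigma}_{\mathrm{loc}}(\R^N)$. By Arzel\`a-Ascoli, every subsequence admits a further subsequence converging in $C^2_{\mathrm{loc}}$ to some $u\in C^2(\R^N)$ with $-\Delta u=g(u)$ and $0\le u\le\alpha_0$. The crux is to show that every such limit equals $\alpha_0$; this forces $u_n\to\alpha_0$ in $C^2_{\mathrm{loc}}$, elliptic bootstrap promotes the convergence to $C^k_{\mathrm{loc}}$ compatible with the smoothness of $g$, and the $L^q$-divergence is then immediate, since for any $R>0$ one has $u_n\ge\alpha_0/2$ on $B_R$ eventually and therefore $\|u_n\|_{L^q(\Omega_n)}\ge(\alpha_0/2)|B_R|^{1/q}\to\infty$ as $R\to\infty$.

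\textbf{Paragraph 3 (showing $u\equiv\alpha_0$).} The limit $u$ inherits a \emph{class-$A$ minimality}: for any $\phi\in C^\infty_c(\R^N)$, eventually $\mathrm{supp}\,\phi\subset\Omega_n$ and minimality of $u_n$ gives $I_{\mathrm{supp}\,\phi}(u_n+\phi)\ge I_{\mathrm{supp}\,\phi}(u_n)$, which passes to the $C^2_{\mathrm{loc}}$-limit. Testing with $\phi_R:=(\alpha_0-u)\eta_R$ (a cutoff equal to $\alpha_0-u$ on $B_R$ and vanishing outside $B_{R+1}$) a direct computation yields
\[
\int_{B_R}\Bigl[\tfrac12|\nabla u|^2+G(\alpha_0)-G(u)\Bigr] \;\le\; C\,R^{N-1}\qquad\forall\,R>0,
\]
both integrands being non-negative since $G$ is strictly increasing on $[0,\alpha_0]$ with unique maximum at $\alpha_0$. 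If $u\not\equiv\alpha_0$, the strong maximum principle (applied to the superharmonic function $u$, because $g\ge 0$ on $[0,\alpha_0]$) forces $u\equiv 0$ or $u<\alpha_0$ strictly throughout $\R^N$; in both cases I would derive a contradiction by combining the density estimate $|\{u<\alpha_0-\delta\}\cap B_R|\le CR^{N-1}/\mu_\delta$ (with $\mu_\delta:=G(\alpha_0)-G(\alpha_0-\delta)>0$) with a refined surgery on expanding balls that exploits the stable linearization $-\Delta(\alpha_0-u)\approx -g'(\alpha_0)(\alpha_0-u)$ around $\alpha_0$, where $g'(\alpha_0)\le 0$.

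\textbf{Main obstacle.} Deducing the pointwise identity $u\equiv\alpha_0$ from the surface-order bound $CR^{N-1}$ is the delicate step: naive local perturbation arguments do not rule out sparse small dips of $u$ below $\alpha_0$ at infinity, and a Liouville-type rigidity for bounded non-negative solutions of the linearization at $\alpha_0$ seems to be required to close the argument.
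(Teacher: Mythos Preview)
Your arguments for (a) and (b) are correct. In fact your test function for (b), the cutoff of the constant $\alpha_0$, is cleaner than the paper's construction (which uses an infinite sum of translated rescaled bumps to drive the energy to $-\infty$).

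For (c) the paper takes a different route and avoids the class-$A$ minimality framework altogether. After extracting a $C^2_{\mathrm{loc}}$ limit $u$ with $0\le u\le\alpha_0$, the paper passes the \emph{second variation} (stability) inequality $\int g'(u_n)\phi^2\le\int|\nabla\phi|^2$ to the limit, and then argues case by case. If $u\equiv 0$, then on any large compact $K$ one has $g(u_n)/u_n\ge g'(0)/2>0$ for $n$ large; the PDE Sturm comparison theorem (comparing with a sign-changing Helmholtz solution) forces $u_n$ to vanish in $K$, contradicting positivity. If $u$ is nonconstant, a rigidity result of Farina for stable solutions gives $\|u\|_\infty<\alpha_0$, whence $c_0:=\min_{[0,\|u\|_\infty]}g(s)/s>0$; then again $g(u_n)/u_n\ge c_0/2$ on large compacts and the same Sturm argument applies to $u_n$. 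Note that the contradiction is obtained on the approximants $u_n$, not on the limit $u$.

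Your approach is salvageable, and in fact the ``main obstacle'' you flag dissolves with one observation you already have at hand. From your competitor estimate
\[
\int_{B_R(x_0)}\bigl[G(\alpha_0)-G(u)\bigr]\le C R^{N-1}
\]
(uniformly in $x_0$, since the $C^1$ bound on $u$ is global) one gets $\bigl|\{u<\alpha_0-\delta\}\cap B_R(x_0)\bigr|\le C_\delta R^{N-1}$ and hence $\fint_{B_R(x_0)}u\to\alpha_0$ as $R\to\infty$. But $-\Delta u=g(u)\ge 0$ on $[0,\alpha_0]$, so $u$ is superharmonic and the solid mean-value inequality gives
\[
u(x_0)\ \ge\ \fint_{B_R(x_0)}u\ \longrightarrow\ \alpha_0.
\]
Thus $u(x_0)=\alpha_0$ for every $x_0$, and $u\equiv\alpha_0$. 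No Liouville theorem for the linearization, no surgery, no De Giorgi--type machinery is needed; the superharmonicity you already invoked for the strong maximum principle closes the argument directly.
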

\begin{proof}
Since $\lambda_{1}(\Omega_{n})\to 0$, taking into account 
\eqref{g:der} we find $n_{0}$ such that, for every $n\geq n_{0}$,
$\lambda_{1}(\Omega_{n})<g'(0)$. As a consequence, we can apply
Proposition \ref{app:Prop} to deduce that there exists a sequence
$(u_{n})_{n\geq n_{0}}$ of positive minimizers of 
$I_{\Omega_{n}}$ satisfying conclusion {\it (a)}.
In order to prove conclusion {\it (b)}  let $\phi\in
C_0^\infty(\R^n)$ be given with $\|\phi\|_2=\|\phi\|_\infty=1$. 
For every $k\in \N$ we set 
$$
\phi_k(x) = \frac{1}{k^{N/2}}\phi(\tfrac{x}{k}+ke_1) ,
$$
so that $\|\phi_k\|_2=1,\|\phi_k\|_\infty\leq 1,\| \nabla \phi_k\|_2\to 0$. 
Without loss of generality, we may assume 
$\sigma\in (0,1)$ from hypothesis \eqref{g:reg} to be so small 
that $2+\sigma\in [2,\frac{2N}{N-2}]$ holds provided $N>2$. Exploiting \eqref{g:reg} and \eqref{g:sign} we
obtain positive constants $A,\,C$ such that
$$
g'(0)z^2-2G(z) \leq A |z|^{2+\sigma}\qquad\text{for }|z|\leq 1,\qquad       
\|\phi_k\|_{2+\sigma}\leq C.
$$
Then, for a fixed positive $t\leq
  \min\{(g'(0)/(4AC))^{1/\sigma},1\}$ and sufficiently large $k\geq k_0$ we have
  $\|t\phi_k\|_\infty\leq 1$ so that the following estimate holds  
  \begin{align*}
   2I(t\phi_k)
   = &t^2 \int_{\R^N} |\nabla \phi_k|^2-g'(0) \phi_k^2 
    + \int_{\R^N} \big( g'(0)(t\phi_k)^2 -  2G(t\phi_k)\big)   \\
   \leq &- \dfrac{g'(0)}2 t^2   + A \int_{\R^{N}} |t\phi_k|^{2+\sigma}  
   \leq 
   \dfrac{t^2}2\left(-g'(0) + 2t^\sigma AC\right) =:-E,
  \end{align*}
where $E>0$ by the  choice of $t$. 
Since the supports of $(\phi_k)$ go  off to infinity we find some 
$k_1\in\N$ such that for all $k\geq k_1$ it results
\begin{align*}
2I(t\phi_{k_0}+t\phi_k)
\leq \dfrac{E}{2} + 2I(t\phi_{k_0}) + 2I(t\phi_k) 
\leq - \frac{3}{2}E.
\end{align*}
Inductively, we find $k_2< k_3< \ldots$ such that for all $k\geq k_m$ 
we have 
\begin{align*}
2I(t\phi_{k_0}+t\phi_{k_1}+\ldots+t\phi_k)
&\leq \dfrac{E}2 + 2I(t\phi_{k_0}+t\phi_{k_1}+\ldots+t\phi_ 
{k_{m-1}}) + 2I(t\phi_k) \\ 
&\leq - (1+m/2)E. 
\end{align*}
Since for any given $m\in\N$ supp$(t\phi_{k_0}+t\phi_{k_1}+
\ldots+t\phi_k)\subset \Omega_{n}$ for sufficiently
large $n$, the same estimate holds true for $I_{\Omega_n}$,
 yielding conclusion {\it (b)}.

In order to show  {\it (c)}, note that the sequence $(u_{n})$
is made of minimizers of $I_{\Omega_{n}}$ so that
\begin{equation} \label{eq:un_stable}
\int_{\Omega_n} g'(u_n)\phi^2  \leq \int_{\Omega_n} |\nabla \phi|^2 \qquad\text{for all }\phi\in C_c^1(\Omega_n).
\end{equation}
By the Ascoli-Arzel\`{a} Theorem and interior Schauder estimates we 
find that $(u_n)$ converges in $C^2_{loc}(\R^N)$
 to some limit 
function $u\in C^2(\R^N)$ satisfying $0\leq u(x) \leq
\alpha_0$ for all $x\in\R^N$ as well as \eqref{eq:ge}.
The Dominated Convergence Theorem allows to pass to the limit in \eqref{eq:un_stable} and we obtain
\begin{equation*} 
\int_{\R^N} g'(u)\phi^2  \leq \int_{\R^N} |\nabla \phi|^2 \qquad\text{for all }\phi\in C_c^1(\R^N),
\end{equation*}
There are now three possibilities: either  $u\equiv 0$ or
$u$ is non-constant or $u\equiv \alpha_0$. As a consequence, it is left to show that the first two
possibilities do not occur.

First, assume by contraction that $u\equiv 0$, so that
for any given compact $K$ we have $u_n\to u$ uniformly on $K$. Thanks to
\eqref{g:der} we can find a sufficiently large $n$ such that $g(u_{n})/u_{n}\geq \delta^{2}:=g'(0)/2$. We
may choose $K$ so large such that the fundamental solution $\psi$ of 
$$
\Delta \psi+\delta^2\psi=0
$$
changes sign within $K$. Then the PDE version of Sturm's comparison theorem (Theorem 5.1 in  
\cite{Swa_comparison_and_oscillation}) shows that $u_n$ has a zero within $K$  
contradicting  the positivity of $u_{n}$. \\
Assume now that $u$  is non-constant. 
Arguing as in the proof of Theorem 1.3 in \cite{Farina_some_symmetry} 
and applying Proposition~1.4 in \cite{Farina_some_symmetry} 
one shows that $u>0$ and $\|u\|_{\infty}<\alpha_{0}$.
As a consequence,  the constant
$$
c_{0}:=\min_{0\leq s\leq \|u\|_\infty} \frac{g(s)}{s}
$$
turns out to be positive and,
for any compact set $K$ we can find $n$ such that 
 $g(u_n)/u_n\geq c_{0}/2$. Choosing again $K$ sufficiently large we get a contradiction as above.

Hence, it turns out that $u\equiv \alpha_{0}$ and in particular we get $\|u_n\|_{L^q(\Omega_n)}\to  \infty$
for all $q\in [1,\infty)$.
%
  \end{proof}


\bibliographystyle{alpha}
\bibliography{bibliography}

\end{document}